\documentclass[12pt]{article}

\usepackage[a4paper,margin=3cm,innermargin=3cm]{geometry}
\usepackage{amsmath,amsthm,amsfonts,amssymb,latexsym,enumerate,graphicx}
\usepackage{marginnote, etoolbox, imakeidx, needspace}
\usepackage{hyperref}
\usepackage[T1]{fontenc}
\usepackage{comment}
\usepackage{indentfirst}
\usepackage{xcolor}
\usepackage{amsmath}

\DeclareMathOperator*{\argmin}{arg\,min}

\textheight232truemm
\textwidth170truemm
\hoffset=-15mm
\voffset=-25mm

\thickmuskip = 5mu plus 1mu minus 2mu
\relpenalty=9000        
\binoppenalty=9999

\newtheorem{theorem}{Theorem}
\newtheorem{lemma}[theorem]{Lemma}

\newtheorem{proposition}{Proposition}

\newtheorem{remark}[theorem]{Remark}
\newtheorem{observation}[theorem]{Observation}

\def\path{\mathop{\mathrm{\,path\,}}\nolimits}

\date{}
\begin{document}
\title{The continualization approach to the\\ on-line hypergraph coloring}

\author{Akhmejanova Margarita \footnote{Computer, Electrical and Mathematical Sciences and Engineering Division, King Abdullah University of Science and Technology (KAUST), Thuwal 23955-6900, Saudi Arabia.}, Bogdanov Ilya\footnote{Moscow Institute of Physics and Technology, Laboratory of Combinatorial and Geometric Structures, Dolgoprudny, Russia.},  ~Chelnokov Grigory \footnote{National Research University Higher School of Economics, Moscow, Russia.
\newline
margarita.akhmejanova@kaust.edu.sa (First Author), ilya.i.bogdanov@gmail.com (Second Author), grishabenruven@yandex.ru (Third Author).}}

\maketitle

\textbf{Abstract.} The paper deals with an algorithmic problem concerning combinatorial game theory. Here we introduce and analyze a continuous generalization of Chip Game from \cite{Kozik}. The general Chip game was introduced by Aslam and Dhagat \cite{AslDhag} to model on-line type problems on hypergraph coloring.

\bigskip
\textbf{Keywords:} on-line coloring, list on-line coloring, property B, Chip game, proper coloring, panchromatic coloring, hypergraph coloring.

\section{Introduction}

During the last two decades the algorithmic aspect of classical coloring problems have received new attention. This is in particular due to applied tasks, in which  data is getting piece-by-piece in a serial fashion and respond is required without delay. Another motivation is dealing with the case when  data is too large to hold in memory.
To simulate this kind of real-world tasks were introduced a new variation of classical coloring problems, called on-line coloring. For instance, $2$-colorability of hypergraphs, also known as ``property B'', was formulated  by  Aslam and Dhagat \cite{AslDhag} in on-line settings. 

\subsection{Classical problems in hypergraph coloring}\label{clas_probl}
Let us briefly recall classical problems on hypergraph coloring. We will use term ``$k$-graph'' for  an $k$-uniform hypergraph, but in general settings we always mean a non-uniform hypergraph.
\begin{itemize}
	\item \textbf{Property B:}
	a hypergraph $H=(V,E)$ has property $B$ (or $2$-colorable) if there is a coloring of $V$ by $2$ colors such that no edge $f \in E$ is monochromatic. Erd\H{o}s and Hajnal \cite{Erd2} (1961) proposed  to find the value $m(k)$ equal to the minimum possible number of edges in a $k$-graph without property $B$. Erd\H{o}s \cite{Erd} (1963--1964) found  bounds $\Omega\left(2^{k}\right) \leq m(k)=O\left(2^{k} k^{2}\right)$ and Radhakrishnan and Srinivasan \cite{RadhSrin} $(2000)$ proved  $m(k)\geq\Omega\left(2^{k}(k / \ln k)^{1 / 2}\right)$. We mention Beck \cite{Beck}(1978) and Duraj,  Gutowski and Kozik \cite{DurGutKozik} (2018), who get a significant progress on $2$-colorability in the non-uniform case. In the most general settings for a given hypergraph  $H=(v,E_1,E_2)$, where $E_1$ and $E_2$ are some sets of edges, we can ask for a black-white coloring such that there are neither black edges in $E_1$ nor white edges in $E_2$.
	
  \item \textbf{Proper coloring:} a hypergraph $H=(V, E)$ is $r$-colorable  if there is a coloring of $V$ by $r$ colors such that no edge $f \in E$ is monochromatic. The general non-uniform case is little known, most results are devoted to the uniform case, see serveys \cite{RaigShab}, \cite{RaigCherk}. 
  For the uniform case, 
 let $m(k,r)$ be the smallest number of edges in a non-$r$-colorable $k$-graph. Bound $m(k,r)=O(k^2r^k\ln r)$ is due to  Erd\H{o}s and bound $m(k,r)\geq\Omega\left(r^{k-1}(k / \ln k)^{r-1 / r}\right)$ is done by Cherkashin and Kozik \cite{CherkKozik} (2014). Also note that when $r>k$ there are stronger results of Akolzin and Shabanov \cite{AkolzinShabanov} (2016).
	
	\item \textbf{Pancromatic coloring:} a vertex $r$-coloring of $H$ is panchromatic if every edge  meets every color.
	Kostocka \cite{Kost} (2002) determined the number $p(k,r)$ as a mimimum possible number of edges in $k$-graph, which does not admit a panchromatic $r$-coloring and found intresting  relationship between $p(k,r)$ and some other classical characteristics. Cherkashin \cite{Cherk} (2018) proved 
	 $p(k, r)=O(k^{2} \ln r\left(\frac{r}{r-1}\right)^{k}/r)$. First author and Balogh recently proved that for all $r^3<k/100 \ln k$ holds that $p(k,r)\geq\Omega((k/\ln k)^{\frac {r-1}{r}}\left(\frac{r}{r-1}\right)^{k}/r^2)$.
	
	\item \textbf{List coloring of $\mathbf{K_{m,m}}$:}
the list chromatic number $\chi_{\ell}(G)$ of a graph $G=(V, E)$ is the minimum integer $r$ such that, for every assignment of a list of $r$ colors to each vertex $V$ of $G$, there is a proper vertex coloring of $G$ in which the color of each vertex is in its list. The study of list colorings  was initiated by Vizing \cite{vizing}(1976) and by Erd\H{o}s, Rubin and Taylor \cite{ErdRubTey}(1980). In particular, Erd\H{o}s, Rubin and Taylor \cite{ErdRubTey} (1980) proved  $
	\chi_{l}\left(K_{m, m}\right)=(1+o(1)) \log _{2}(m) \text { as } m \rightarrow \infty.
	$
	\end{itemize}
 
\subsection{New twist: 
on-line type of 
classical coloring problems}
Many existing coloring algorithms  become futile on massive graphs, due to their high space and time
complexity. One method for dealing with this is on-line algorithms. Here we describe on-line counterparts to the problems of Section \ref{clas_probl} and give the formal definitions. These counterparts will be single-player games with one player --- called Painter.
\begin{itemize}
	\item \textbf{On-line property B:}
given a hypergraph $H=(V,E)$. Painter does not know  the hypergraph $H$, but he knows the set of edge cardinalities  , i.e. multi-set $\mathcal{A}(H)=\{|e|:e\in E(H)\}$. Let vertex set $V$ be enumerated by $\mathbb{N}$. In round $i$, Painter gets the information about the subset  of edges which
contain vertex $v_i$. Painter must immediately assign color black or
white to the presented vertex $v_i$. Painter wins when all vertices have been colored  and no edge is monochromatic. Denote this game by $(\mathcal{A},2)_{ol}$ game.
For which multisets of edge
cardinalities $\mathcal{A}$ Painter has a winning strategy in $(\mathcal{A},2)_{ol}$ game? 
\\
\\
On-line property B was introduced by Aslam and Dhagat \cite{AslDhag} (1993). They considered the uniform case (when the cardinality of edges are equal, hence $\mathcal{A}=\{k,\ldots, k\}$). Let $m_{o l}(k)$ be the maximal number such that Painter has a winning strategy  for all $k$-graphs with $E(H)\leq m_{o l}(k)$. Aslam and Dhagat \cite{AslDhag} (1993) prowed that $2^{k-1}\leq m_{o l}(k,2)\leq k\cdot\phi^{2k}=O(k(2.62)^k).$ Duraj, Gutowski and Kozik \cite{Kozik} (2015) introduced a new intresting Painter's strategy and improved the above estimate: $m_{o l}(k,2) \leq 16\cdot 2^{k}.$ 
	
	\item \textbf{On-line proper coloring:} by analogy to on-line property $B$ was considered $(\mathcal{A},r)$ game, where Painter just uses $r$ colors. The uniform case, i.e. $m_{ol}(k,r)$ number, have been studied by Khuzieva, Shabanov and Svyatokum \cite{KhuzShavSv}. Repeating arguments from \cite{AslDhag} for the case of $r$ colors they proved 
$r^{k-1}\leq m_{ol}(k,r)\leq k(r-1)^2r^k.$ Unfortunately, the proof of $m_{ol}(k)=\Theta(2^k)$ from \cite{Kozik} can't easily be generalized to the case $r>2$, so the current gap between lower and upper bounds is linear in $k$.

	\item \textbf{On-line panchromatic coloring:} it is a generalisation of $(\mathcal{A},2)$ game for the case of panchromatic coloring. Painter uses $r$ colors and he wins if the final coloring is panchromatic. About the uniform case, i.e. about $p_{ol}(k,r)$ number, is known $r^{-1}\left(r/r-1\right)^{k}\leq p_{ol}(k,r)\leq 3 r(r-1)^{2} k\left(r/r-1\right)^{k+1}$ \cite{KhuzShavSv}.
	
	\item \textbf{On-line list coloring of $\mathbf{K_{m,m}}$:} on-line list coloring was introduced by Schauz \cite{Schauz1},\cite{Schauz2} and Zhu \cite{Zu}. Given a finite bipartite graph $K_{m,m}$. On each round
a set of vertices having a particular color in their lists is revealed and the Painter chooses an independent subset to receive that color. Every vertex is presented as many times as the size of its list. A graph $K_{m,m}$ is said to be $r$-paintable if Painter can produce a list $r$-coloring of $K_{m,m}$ under above conditions. 
In the uniform case, i.e. when all list have the same size, was established that $K_{m, m}$ is $(\log _{2} m+O(1))$-paintable as $m \rightarrow \infty$ \cite{Kozik}.
	
\end{itemize}

All above on-line colorings can be considered as the games between Painter and a second player, called Lister, who builds the hypergraph. So, we might as well ask about the winning strategy of the second player. 

\subsection{The efficient approach for the on-line coloring: Chip game}

It turns out
that in fact on-line hypergraph coloring can be solved by  applying a
method that relies on a technique of chip game, first suggested by Spenser \cite{Spenser} and Aslam and Dhagat \cite{AslDhag}. We first describe chip game in a very general way.

\bigskip
\emph{General Chip game ($S,\tau_{i\in \mathcal F}$)} is determined by a finite
set $S$, called the \emph{set of paths}, an element $dead\in S$, called {\em death} path, and a set of mappings $\tau_{i\in\mathcal{F}}\colon S \to S$,
satisfying $\tau_i(dead)=dead$ (once dead stays dead). The cells of the
playing field are enumerated by $(\mathbb{N}\cup \{0\})\times S$, the cells $(0,path~m): m\neq dead$  are called {\em winning cells}. Some cells contain
chips, there can be more then one chip in the cell. In each round Pusher assigns to each chip whether it stands or
runs on the current round. Then Remover, who sees Pusher's
assignment, picks one of the mappings $\tau_i$.  After that each
standing chip keeps its cell, and each running chip changes its cell
according to the rule $(n,path~m) \to (n-1,path~\tau_i(m))$. Then the new round
begins. Pusher wins if he puts a chip into a winning cell.

\bigskip
Below we show how to formulate online hypergraph coloring problems using Chip game approach.
\begin{itemize}
	\item \textbf{On-line property B:} In terms of ($S,\tau_{i\in \mathcal F}$) game,  
$$S=\{0,1,2,dead\}, \quad
\tau_1:\{0,1,2\}\to \{1, 1, dead \}, \quad \tau_2:\{0,1,2\}\to \{2, dead, 2\}. 
$$
Let us build the bijection between on-line property B and the above ($S,\tau_{i\in \mathcal F}$) chip game. Actually, we will show that  Painter=Remover and Presenter=Pusher and throughout the entire $(\mathcal{A},2)_{ol}$ game every edge $e$ with $i, i<|e|$ colorless vertices and $|e|-i$ vertices colored with $j,j\in\{1,2\}$ one-to-one corresponds to a chip in the cell $(i, path~j)$ and every colorless edge $e$ one-to-one corresponds to a chip in the cell $(|e|, path ~0)$. Now formally. Let $\mathcal{A}$ be the initial chip distribution in the $0$ path. Then, if Remover has a winning strategy with this $\mathcal{A}$ in ($S,\tau_{i\in \mathcal F}$) chip game, then Painter can use this strategy
to win on-line coloring $(\mathcal{A},2)_{ol}$ game. Painter
first represents each edge cardinality $a\in \mathcal{A}$ by the chip in the cell $(a,path~ 0)$. When
Presenter reveals a new vertex $v$ and declares to which edges $v$ belongs, Painter imagines that the Pusher has made a move in which all the running chips are edges containing vertex $v$ and only them. He examines what response action Remover should make, and in a sense "makes" Remover's move, i.e. if the winning strategy for Remover is to take $\tau_j, j\in\{1,2\}$, then Painter colors $v$ with $j$. Similarly, Pusher's winning strategy with $\mathcal{A}$ in a chip game  can be used by Presenter to win $(\mathcal{A},2)_{ol}$ game.

\bigskip
The bijections for on-line proper and on-line panchromatic coloring games are proved in a similar way.

	\item \textbf{On-line proper coloring:} 
In terms of ($S,\tau_{i\in \mathcal F}$) game, 
$$ S=\{0,1,\ldots, r ,dead\}, \quad
\tau_i\colon\{0, 1,\ldots, i, \ldots ,r\}\to\{i, dead,\ldots, i, \ldots ,dead\},$$
i.e. $\tau_{i}$ maps $0$ and $i$ to the $i$ and $S\setminus\{0,i\}$ to the $dead$.
	\item \textbf{On-line panchromatic coloring:}
In terms of ($S,\tau_{i\in \mathcal F}$) game, $$ S= \{0,1\}^r, \quad
\tau_i:j\to j\vee (0,\ldots, 0,1,0,\ldots,0),$$
where $(0,\ldots, 0,1,0\ldots,0)$ has $1$ on the $i$-th position, $(1,\ldots,1)$ is dead path and $\vee$ is the logical OR. As in the previous case there exists a bijection between this chip game and on-line panchromatic coloring.
	\item \textbf{On-line list coloring of $\mathbf{K_{m,m}}$:}
	In terms of General ($S,\tau_{i\in \mathcal F}$) chip game, 
$$ S=\{1, 2,dead\}, \quad
\tau_1\colon\{1, 2\}\to\{1, dead\}, \quad \tau_2\colon\{1, 2\}\to\{dead, 2\}.$$
Actually, in \cite{Kozik} is explained that on-line $k$-list coloring of $K_{m,m}$ is equivalent to the following game given below.
\end{itemize}

\emph{``Chip game''} --- A historical remark. In \cite{Kozik} was presented a special version of chip game, where there is no $0$ road, i.e. in terms of ($S,\tau_{i\in \mathcal F}$) game,  
\begin{equation}
S=\{1,2,dead\}, \quad
\tau_1:\{1,2\}\to \{1, dead \}, \quad \tau_2:\{1,2\}\to \{dead, 2\}. 
\end{equation}
 For this game there is no bijection from on-line property B. But clearly, if in on-line property B game one replaces a chip $(i,path~0)$  by two chips $(i,path~1)$ and $(i,path 2)$, then the new arrangement does not get worse for Pusher. Hence, if Remover has a winning strategy  in this chip game, then Painter can use this strategy
to win on-line coloring $(N,2)_{ol}$ game. And if Pusher has a winning strategy in the chip game, then Presenter can use this strategy to win on-line coloring $(2N,2)_{ol}$ game.   For more details, see \cite{Kozik}.

\section{The continualization approach to the on-line hypergraph coloring}

Discrete processes can sometimes be analysed by placing them in a continuous framework. Here we consider a generalisation of \emph{General Chip  ($S,\tau_{i\in \mathcal F}$)} game, where instead of chips we have non-negative real numbers, called gold sand. One can consider these numbers as the amount of gold sand left by crashing chips in the chip game). Now formally. 

\bigskip
\emph{General Gold Sand game ($S,\tau_{i\in \mathcal F}$)} is determined by a finite
set $S$, called the \emph{set of paths}, an element $dead\in S$, called {\em death} path, and a set of mappings $\tau_{i\in\mathcal{F}}\colon S \to S$,
satisfying $\tau_i(dead)=dead$ (once dead stays dead). The cells of the
playing field are enumerated by $(\mathbb{N}\cup \{0\})\times S$, the cells $(0,path~m): m\neq dead$  are called {\em winning cells}. All cells contain real non-negative numbers, called \emph{gold sand}. There is only finite amount of non-zero numbers. In each round Pusher splits gold sand in each
cell into two parts, called {\em standing} and {\em running}. Then Remover (knowing how Pusher shared and knowing the current amount of gold sand in each cell) picks one of the mappings $\tau_i$.  After that each standing part keeps its cell, and each running part changes its cell according to the rule $(n,path~m) \to (n-1,path~\tau_i(m))$. 
Moreover, all sand from the dead path is removed from the field and all sand from the winning cells instantly becomes Pusher's win and is also removed from the field. Then the new round begins. 

\bigskip
\emph{The question:}
let $ x=[x_{i,path~j}]_{([N]\cup \{0\})\times S}$ denotes the vector of initial distribution of gold sand, in short, arrangement $ x$. Let $\mathcal{X}$ be the set of all vectors of initial distribution of gold sand. Can one find the supremum of Pusher's win for any given arrangement $ x\in\mathcal{X}$?

\subsection{Our results}

\begin{proposition}[Continuous on-line property B]
	 For each $p \in[0,1]$ through $ w(p)$ denote the vector $ w_{p} \in \mathcal{X}$, such that $\left(w_{p}\right)_{0, p a t h~0}=$ $\left(w_{p}\right)_{0, \text { path } 1}=\left(w_{p}\right)_{0, \text { path } 2}=1$ and
	 \begin{equation*}
	 	w(p)_{i, p a t h~j}=\left\{\begin{array}{rll}
	 		p^{i}+(1-p)^{i} & \text { if } & j=0 \\
	 		p^{i} & \text { if } & j=1 \\
	 		(1-p)^{i} & \text { if } & j=2
	 	\end{array}\right.
	 \end{equation*}
	for $i \in \mathbb{N}$.
	Then the supremum of Pusher's win in Continuous on-line property B game on the initial arrangement $ x$ is $\min _{p}  x \cdot  w(p)$ \footnote{operation $\cdot$ is scalar(dot) product of two vectors.}.
\end{proposition}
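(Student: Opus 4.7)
The plan is to prove two matching bounds: an upper bound on Pusher's supremum via a Remover strategy parametrised by any $p$, and a lower bound via a Pusher strategy for a minimiser $p^{\ast}\in\argmin_{p\in[0,1]} x\cdot w(p)$ (which exists because $p\mapsto x\cdot w(p)$ is a polynomial on a compact interval).

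For the upper bound, fix $p\in[0,1]$ and define the potential of an intermediate distribution $a$ by $\Phi_p(a):=\sum_{n,j} a_{n,j}\, w(p)_{n,j}$; let $W$ denote the sand already won by Pusher. I claim Remover can keep $\Phi_p(a)+W\le x\cdot w(p)$ throughout the game. The engine is the three-term recurrence
\begin{align*}
w(p)_{n,0} &= p\cdot w(p)_{n-1,1}+(1-p)\cdot w(p)_{n-1,2},\\
w(p)_{n,1} &= p\cdot w(p)_{n-1,1},\\
w(p)_{n,2} &= (1-p)\cdot w(p)_{n-1,2},
\end{align*}
valid for all $n\ge 1$ under the conventions $w(p)_{0,j}=1$ for $j\in\{0,1,2\}$ and $w(p)_{\cdot,dead}=0$. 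Writing $\Delta(\tau_i)$ for the change in $\Phi_p+W$ if Remover plays $\tau_i$ after a given Pusher split, these recurrences yield, coefficient-by-coefficient in the running amounts $r_{n,j}$, the key identity
$$p\cdot\Delta(\tau_1)+(1-p)\cdot\Delta(\tau_2)=0.$$
Hence $\min_i\Delta(\tau_i)\le 0$, so Remover can always pick $\tau_i$ making $\Phi_p+W$ non-increasing; since $\Phi_p\ge 0$, this yields $W\le x\cdot w(p)$, and minimising over $p$ closes this direction.

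For the lower bound, I use the same identity in reverse. Pusher's plan is to choose the running amounts $(r_{n,j})$ so that $\Delta(\tau_1)=\Delta(\tau_2)$, which by the identity forces both to equal $0$ and makes Remover truly indifferent, preserving $\Phi_{p^{\ast}}+W$ exactly. The condition $\Delta(\tau_1)=\Delta(\tau_2)$ amounts to the single linear constraint $\sum_{n,j} r_{n,j}\bigl(w(p^{\ast})_{n-1,\tau_1(j)}-w(p^{\ast})_{n-1,\tau_2(j)}\bigr)=0$, leaving ample room to route sand downward. A natural concrete implementation: Pusher always runs every unit of sand sitting at $(1,0)$ (which becomes win under either $\tau_i$, since the corresponding coefficient $w(p^{\ast})_{0,1}-w(p^{\ast})_{0,2}=0$ automatically satisfies balance), and at higher levels pairs cells $(n,1)$ and $(n,2)$, running matched portions in the ratio $r_{n,1}/r_{n,2}=((1-p^{\ast})/p^{\ast})^{n-1}$ that respects the constraint, thereby draining $\Phi_{p^{\ast}}$ into $W$.

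The hard part is the quantitative progress step: showing that a balanced strategy actually converts a definite positive fraction of the remaining $\Phi_{p^{\ast}}$ into $W$ over each bounded block of rounds, so that $W$ converges to $x\cdot w(p^{\ast})$. The main difficulty is to avoid ``stalled'' configurations in which all of Pusher's sand has collapsed onto a single non-zero path and the balance constraint leaves no room for any downward flow; this must be prevented by Pusher's earlier choices, so the strategy has to be defined globally rather than greedily. A cleaner route may be to recognise the problem as a linear program whose dual is exactly the family $\{w(p):p\in[0,1]\}$, and invoke LP-duality/minimax reasoning to extract a Pusher strategy from the primal optimum. Boundary cases $p^{\ast}\in\{0,1\}$ (where the minimum collapses to the trivial level-$0$ contribution) and an $\varepsilon$-leakage argument for the infinite-round game complete the picture.
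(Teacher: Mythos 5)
Your upper bound is correct and is essentially the paper's argument: the identity $p\,\Delta(\tau_1)+(1-p)\,\Delta(\tau_2)=0$ is exactly the paper's balance formula, and choosing the $\tau_i$ with $\Delta(\tau_i)\le 0$ makes the potential $x\cdot w(p)$ non-increasing, giving $E(x)\le\min_p x\cdot w(p)$. (The paper runs this with the current minimiser $p^*_x$ at each step rather than a fixed $p$, but your fixed-$p$ version is equally valid for this direction.)

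The lower bound, however, contains a genuine gap, and it is precisely the part you flag as ``the hard part.'' You correctly identify the right invariant (a balanced running part keeps $\Phi_{p^*}+W$ constant) and the right obstruction (configurations where the balance constraint admits no nonzero downward flow, e.g.\ when the sand available in paths $1$ and $2$ cannot realise the required ratios $r_{n,1}/r_{n,2}=((1-p^*)/p^*)^{n-1}$, or when $p^*\in\{0,1\}$). But you do not resolve it; the suggestions ``the strategy has to be defined globally'' and ``recognise the problem as a linear program'' are not carried out, and the LP route is not immediate since the game is sequential and adaptive. This obstruction is where almost all of the paper's work lies. The paper's resolution has two components you would need to supply. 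First, \emph{degenerate} arrangements (where $p^*_x\in\{0,1\}$ or all sand sits at levels $0$ and $1$) are handled by a separate direct induction showing $E(x)=e(x)$ there: Pusher runs all of paths $0$ and $1$ together with a calibrated amount from $(1,\mathrm{path}~2)$, and one checks the resulting arrangement is again degenerate. Second, on $\eps$-regular arrangements the paper does \emph{not} keep $p^*$ fixed: it uses the exactly balanced direction $d_{k,j}=\frac{k}{N}x_{k,j}$ (balanced with respect to the \emph{current} $p^*_x$ by the first-order condition $h(x,p^*_x)=0$), scaled by a small $\mu$, and then must bound the loss $y\cdot w(p^*_x)-e(y)$ caused by the drift of the minimiser. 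That bound is quadratic in $\|x-y\|_1$ and requires the Lipschitz estimate on $x\mapsto p^*_x$ (via the implicit function theorem and strict convexity on $\eps$-regular arrangements) plus a second-order Taylor estimate, together with a progress measure $q(x)=\sum_i i|x_i|$ that decreases geometrically. Without an argument of this kind (or a worked-out minimax substitute), your proof establishes only one of the two inequalities in the proposition.
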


\begin{proposition}[Continuous on-line proper coloring]
	 For each r-plet $p=\left(p_{1}, \ldots, p_{r}\right)$, such that $p_{i} \geqslant 0$ and $p_{1}+\cdots+p_{r}=1$, through $ w(p)$ denote the vector $ w(p) \in \mathcal{X}$, such that $w(p)_{0, \text { path } 0}=\left(w_{p}\right)_{0, \text { path } 1}=\cdots=\left(w_{p}\right)_{0, \text { path }r}=1$ and
	\begin{equation*}
	w(p)_{i, p a t h ~j}=\left\{\begin{array}{rlr}
		p_{1}^{i}+p_{2}^{i}+\cdots p_{r}^{i} & \text { if } \quad & j=0 \\
		p_{j}^{i} & \text { if } & j>0
	\end{array}\right.
	\end{equation*}
	for $i \in \mathbb{N}$.
	Then the supremum of Pusher's win in Continuous on-line proper coloring game on the initial arrangement $x$ is $\min _{p}  x \cdot  w(p)$.
\end{proposition}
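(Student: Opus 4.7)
The plan is to prove matching upper and lower bounds, in direct analogy with Proposition 1, with the single parameter $p\in[0,1]$ replaced by an $r$-tuple $p=(p_1,\dots,p_r)$ on the probability simplex $\{p : p_j \ge 0,\ \sum_j p_j = 1\}$.

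For the upper bound (Remover's side) I fix any such $p$ and use $\Phi_p(x) := x\cdot w(p)$ as a potential. The key step is to show that, for any splitting by Pusher of the current state $x$ into standing and running parts, Remover can choose $k\in\{1,\dots,r\}$ making $\Phi_p(x') + W \le \Phi_p(x)$, where $x'$ is the resulting state and $W$ is the win collected this round. I would verify this by the averaging trick: if $k$ is drawn with weight $p_k$, the expected value of $\Phi_p(x) - \Phi_p(x') - W$ equals zero. Indeed, each unit of running sand at $(n,0)$ contributes $\sum_\ell p_\ell^n - p_k^{n-1}$, which averages to $\sum_\ell p_\ell^n - \sum_k p_k^n = 0$; each unit of running sand at $(n,j)$ with $j\ge 1$ contributes $p_j^n - p_j^{n-1}$ when $k=j$ and $p_j^n$ when $k\ne j$, giving expectation $p_j(p_j^n - p_j^{n-1}) + (1-p_j)p_j^n = 0$. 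The convention $p_j^0 = 1 = w(p)_{0,j}$ makes these formulas valid also at $n=1$, where the running sand actually reaches a winning cell. Hence some $k$ realises $\Phi_p(x) - \Phi_p(x') - W \ge 0$; Remover plays it, and the invariant $\Phi_p(x_t) + (\text{cumulative Pusher win through round } t) \le x\cdot w(p)$ is preserved. Since $\Phi_p\ge 0$, cumulative win is bounded by $x\cdot w(p)$, and minimising over $p$ yields the upper bound.

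For the lower bound, pick $p^*$ attaining $\min_p x\cdot w(p)$. Pusher's strategy uses $p^*$ to realise the invariant $\Phi_{p^*}(x_t) + (\text{cumulative win}) = x\cdot w(p^*)$ with equality. The idea is to choose at each round a split that makes the per-round change $\Phi_{p^*}(x) - \Phi_{p^*}(x') - W$ vanish for \emph{every} Remover response $k$ --- a system of $r$ linear equations in Pusher's split variables that, thanks to the first-order optimality of $p^*$ on the simplex, admits a family of nondegenerate solutions. Within this family Pusher selects infinitesimal ``transport'' moves (running only fractions of order $\varepsilon$ per round) to slowly funnel sand toward the winning cells; over $O(1/\varepsilon)$ rounds, $\Phi_{p^*}(x_t)\to 0$ and consequently cumulative win $\to x\cdot w(p^*) = \min_p x\cdot w(p)$. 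Boundary cases where $p^*$ lies on the boundary of the simplex are reduced to the interior case by approximating $p^*$ with interior vectors $p^\varepsilon$ and passing $\varepsilon\to 0$ via continuity of $x\cdot w(\cdot)$.

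The main technical obstacle is in the lower bound. The balancing constraints leave Pusher's splits in a rather rigid affine family, and in certain configurations --- most notably when the sand is concentrated on a single non-zero path --- the family collapses so that the only admissible split is the trivial ``do nothing'' one, threatening to stall all progress. Unblocking this requires a careful sequential construction that interleaves transport moves across different paths, together with quantitative bookkeeping showing the per-round potential loss is $O(\varepsilon^2)$ while per-round progress is $O(\varepsilon)$, so that the total loss over $O(1/\varepsilon)$ rounds vanishes as $\varepsilon\to 0$. Making this explicit is the step I expect to require the most care.
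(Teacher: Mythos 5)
Your overall architecture is the same as the paper's (the paper proves only Proposition~1 in detail and leaves the $r$-colour case to the analogous argument): a potential $x\cdot w(p)$ that Remover can keep from increasing by an averaging argument, and a Pusher strategy built from ``balanced'' infinitesimal moves whose potential loss is second order. Your upper bound is complete and correct; fixing $p=\argmin_p x\cdot w(p)$ once and for all and choosing $k$ by the weighted averaging identity is if anything a little cleaner than the paper's version, which recomputes $p^*_x$ each round and then shows separately (Proposition~\ref{'-0}) that $e$ never increases. Your identification of the balanced family via first-order optimality on the simplex also matches the paper: the direction $d_{k,\path j}=\frac kN x_{k,\path j}$ makes all $r$ responses equipotential precisely because the Lagrange condition forces $\partial(x\cdot w(p))/\partial p_j$ to be constant over the active coordinates.

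The genuine gap is in the lower bound, exactly where you flag it: the degenerate arrangements, i.e.\ those where $p^*$ sits on the boundary of the simplex (or the sand lies entirely in levels $0$ and $1$). Your proposed fix --- approximate $p^*$ by interior points $p^\varepsilon$ and pass to the limit ``by continuity of $x\cdot w(\cdot)$'' --- does not work: continuity controls the \emph{value} $\min_p x\cdot w(p)$, but it does not hand Pusher a strategy, because for a boundary $p^*$ the balanced-move machinery is not merely perturbed but structurally broken (the first-order condition fails in the directions leaving the simplex, so no nontrivial balanced split preserving $e$ need exist, and the Taylor/implicit-function estimates controlling $|p^*_x-p^*_y|$ blow up as the regularity parameter goes to $0$). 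The paper fills this hole with a separate, explicitly constructive argument (Proposition~\ref{degen-win}): for a degenerate arrangement $e(x)$ equals the total mass on a distinguished set of paths, and Pusher extracts it by an induction on the top occupied level, running all sand on those paths together with a calibrated amount from the remaining path so that either he wins outright or the resulting arrangement is again degenerate; the general case is then reduced to the union of the $\varepsilon$-regular and degenerate cases via the Lipschitz property of $E$ and $e$. You need the $r$-colour analogue of that induction, and it is genuinely more delicate there because the boundary of the simplex is stratified: any proper subset of the coordinates of $p^*$ may vanish, so ``positively/negatively degenerate'' becomes a family of $2^r-2$ cases indexed by the set of dead colours, and the inductive invariant must be preserved within each stratum. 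Until that construction is written out, the lower bound is not established.
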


\begin{proposition}[Continuous on-line panchromatic coloring]
 For each r-plet $p=\left(p_{1}, \ldots, p_{r}\right)$, such that $p_{i} \geqslant 0$ and $p_{1}+\cdots+p_{r}=1$, through $ w(p)$ denote the vector $ w(p) \in \mathcal{X}$, such that  $w(p)_{0, \text { path } 0}=\left(w_{p}\right)_{0, \text { path } 1}=\cdots=\left(w_{p}\right)_{0, \text { path }2^{r}-1}=1$ and for all $i \in \mathbb{N}$
$$
w(p)_{i, p a t h~j}=\sum_{\mathcal{M}\subseteq \{0,1\}^r:~\mathcal U_j<\mathcal M}\left(S(\mathcal M)\right)^i(-1)^{r-|\mathcal M|-1}$$ 
where $\mathcal{U}_j$ is binary code of path $j$ and $S(\mathcal M)=\sum_{i\in \mathcal M} p_i$, and $<$ is bitwise.
See example \footnote{In case $r=4$ we have $f_j($1111$)=0, f_j(1110)=(p_1+p_2+p_3)^j,
	f_j(1100)=(p_1+p_2+p_3)^j+(p_1+p_2+p_4)^j-(p_1+p_2)^j, f_j(1000)=(p_1+p_2+p_3)^j+(p_1+p_2+p_4)^j+(p_1+p_3+p_4)^j-(p_1+p_2)^j-(p_1+p_3)^j-(p_1+p_4)^j+p_1^j$. And similarly, for other paths.}.
Then the supremum of Pusher's win in Continuous on-line proper coloring game on the initial arrangement $ x$ is $\min _{p}  x \cdot  w(p)$.
\end{proposition}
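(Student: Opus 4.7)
The plan is to mirror the two-sided potential argument used for Propositions~1 and~2, concentrating all the new combinatorial work at the inclusion-exclusion formula defining $w(p)$.

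First I would identify $w(p)_{n,\mathrm{path}~j}$ with the probability that the random walk on $\{0,1\}^r$ starting at $\mathcal{U}_j$ with step ``bitwise OR with the unit vector in coordinate $k$'', where $k\sim p$, avoids the state $(1,\ldots,1)$ for $n$ rounds. Writing this as $1-\Pr[\text{every zero coordinate of }\mathcal{U}_j\text{ gets hit}]$ and expanding by inclusion-exclusion over the set $T$ of missed coordinates, then substituting $\mathcal{M}=[r]\setminus T$, reproduces the formula in the statement (the leading $1$ cancels the $\mathcal{M}=[r]$ term, which is why that term is excluded; the signs match because $(-1)^{|T|}=(-1)^{r-|\mathcal{M}|}$). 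A direct binomial computation verifies $w(p)_{0,j}=1$ for $j\ne\mathrm{dead}$ and $w(p)_{0,\mathrm{dead}}=0$, and the probabilistic interpretation yields the key recurrence
\[
w(p)_{n,\mathrm{path}~j}=\sum_{k=1}^{r}p_k\,w(p)_{n-1,\,\mathrm{path}~(j\vee e_k)}\qquad(n\geq 1).
\]

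Next, for the upper bound I fix any $p$ and use the potential $\Phi(y):=y\cdot w(p)$. After Pusher splits each cell's sand into standing $s_{n,j}$ and running $r_{n,j}$, if Remover picks $\tau_k$ the quantity $W+\Phi(y^{\mathrm{new}})$ --- with $W$ the sand won this round --- evaluates to $\sum_{n,j}s_{n,j}w(p)_{n,j}+\sum_{n\geq 1,j}r_{n,j}w(p)_{n-1,\,j\vee e_k}$. Averaging this expression over $k$ with weights $p_k$ and applying the recurrence above shows the average equals $\Phi(y)$, so \emph{some} $k^*$ achieves $W+\Phi(y^{\mathrm{new}})\leq\Phi(y)$. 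Iterating over rounds bounds Pusher's total winnings by $x\cdot w(p)$; minimizing over $p$ gives $\min_p x\cdot w(p)$.

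For the lower bound, continuity on the compact simplex yields a minimizer $p^*\in\arg\min_p x\cdot w(p)$, and Pusher commits to $p^*$. As in Propositions~1 and~2, his task at each round is to choose a split $(s,r)$ so that for \emph{every} $k$,
\[
\sum_{n\geq 1,\,j}r_{n,j}\bigl[w(p^*)_{n-1,\,j\vee e_k}-w(p^*)_{n,j}\bigr]\geq 0,
\]
which is exactly what forces $W+\Phi_{p^*}(y^{\mathrm{new}})\geq\Phi_{p^*}(y)$ against any Remover response. The recurrence makes the $p^*$-weighted average of these expressions vanish, and the KKT / complementary-slackness conditions characterizing $p^*$ as a minimizer translate into feasibility of the resulting linear system in $r$ --- the same ``surviving-fraction'' split of each chip used in the earlier propositions. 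Summing across rounds gives Pusher's winnings $\geq x\cdot w(p^*)$, matching the upper bound.

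The hard part is this last step. The recurrence and the Remover potential argument are routine once the inclusion-exclusion formula is parsed, but unlike Proposition~1 (where $w(p)_{n,j}$ has at most two summands) each weight here is a signed sum over the non-maximal supersets of $\mathrm{supp}(\mathcal{U}_j)$, of size up to $2^{r-|\mathrm{supp}(\mathcal{U}_j)|}$ and with alternating signs. Verifying that a single split $(s,r)$ simultaneously satisfies the $r$ one-sided inequalities above, rather than merely their $p^*$-weighted average, requires careful use of the minimax optimality of $p^*$ and the monotonicity of $\mathcal{M}\mapsto S(\mathcal{M})$. That is where the bulk of the technical work lies.
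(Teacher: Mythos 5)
Your probabilistic reading of the weights is correct and is a nice way to organize the combinatorics: $w(p)_{n,\mathrm{path}\,j}$ is indeed the survival probability of the OR-random-walk started at $\mathcal U_j$, the inclusion--exclusion over missed coordinates reproduces the stated formula (with the $\mathcal M=[r]$ term cancelling the leading $1$), and the recurrence $w(p)_{n,j}=\sum_k p_k\,w(p)_{n-1,\,j\vee e_k}$ follows by conditioning on the first step. With that recurrence your Remover half is complete and matches the strategy the paper uses for Proposition~1 (you fix one $p$ throughout rather than re-minimizing each round, which is a legitimate simplification for the upper bound). Note also that the balanced direction you are looking for is not a ``feasibility of a linear system'' question: the identity $\frac{\partial}{\partial p_k}w(p)_{n,j}=n\,w(p)_{n-1,\,j\vee e_k}$ shows that the explicit choice $d_{n,j}=\frac nN x_{n,j}$ (the same ``take the fraction $n/N$ of each cell'' split as in the paper's Lemma on the direction of the shift) satisfies $\overrightarrow d(\tau_1)\cdot w(p^*)=\cdots=\overrightarrow d(\tau_r)\cdot w(p^*)$ whenever $p^*$ is an interior critical point of $p\mapsto x\cdot w(p)$ on the simplex; combined with the averaging identity this makes all $r$ inequalities hold with equality. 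So the step you flag as ``the bulk of the technical work'' is in fact a one-line computation once the recurrence is in hand.

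The genuine gap is elsewhere, in how you propagate the lower bound across rounds. As written, Pusher ``commits to $p^*$'' once; but after the first move the new arrangement $y$ generally has $p^*_y\neq p^*_x$, and a nonzero nonnegative split balanced with respect to the \emph{old} $p^*$ need not exist at $y$ (e.g.\ if the surviving sand concentrates on cells for which the $r$ shifted weights at $p^*_x$ differ, no admissible $r\geq 0$, $r\leq y$ equalizes them). The paper's proof of Proposition~1 deals with exactly this: $p^*$ is recomputed every round, each recomputation costs $e(x)-e(y)=y\cdot w(p^*_x)-y\cdot w(p^*_y)\geq 0$, and Lemmas~\ref{p_x-p_y} and~\ref{yw(p)-e(y)} bound this loss by $O(\|x-y\|_1^2/\|x\|_1)$ so that the total loss over the game is $O(\eps)\|x\|_1$. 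None of this second-order control appears in your sketch, and without it the claimed conclusion ``winnings $\geq x\cdot w(p^*)$'' does not follow. Separately, you must handle the arrangements where $p^*$ lies on the boundary of the simplex (the paper's degenerate case, Proposition~\ref{degen-win}), where the first-order conditions become inequalities and no balanced direction exists; there Pusher needs a direct inductive strategy. In the panchromatic setting this is materially harder than in Proposition~1, since the boundary of the $(r-1)$-simplex has many faces (any subset of the $p_i$ may vanish) and the induction of Proposition~\ref{degen-win} must be redone for each face. These two omissions --- the quantitative control of the drift of $e$ under re-optimization of $p^*$, and the degenerate/boundary analysis --- are where the real work of adapting the paper's argument lies, not in the inclusion--exclusion bookkeeping.
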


\begin{proposition}[Continuous on-line list coloring of $K_{m,m}$]
 For each $p \in[0,1]$ through $ w(p)$ denote the vector $ w_{p} \in \mathcal{X}$, such that $\left(w_{p}\right)_{0, \text { path } 1}=\left(w_{p}\right)_{0, \text { path } 2}=1$ and
\begin{equation*}
	w(p)_{i, p a t h ~j}=\left\{\begin{array}{rll}
		p^{i} & \text { if } & j=1 \\
		(1-p)^{i} & \text { if } & j=2
	\end{array}\right.
\end{equation*}
for $i \in \mathbb{N}$.
Then the supremum of Pusher's win in Continuous on-line list coloring of $K_{m,m}$ game on the initial arrangement $ x$ is $\min _{p}  x \cdot  w(p)$.
\end{proposition}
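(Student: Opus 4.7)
My plan is to establish matching upper and lower bounds for the supremum of Pusher's win. For the upper bound (Remover's side), fix a parameter $p\in[0,1]$ and use $w(p)$ as a potential function: define $\Phi := \sum_{(i,j)} s_{i,j}\,w(p)_{i,j} + W$, where $s_{i,j}$ is the current amount of sand at cell $(i,\mathrm{path}\,j)$ and $W$ is Pusher's accumulated win. Because $w(p)_{0,1}=w(p)_{0,2}=1$, transferring sand from a winning cell to $W$ does not alter $\Phi$. Letting $R_j$ denote the total $w(p)$-weight of the running sand on path $j$ in the current round, a direct case check (running path-$1$ sand at level $i$ has its weight rescaled by $1/p$, while running path-$2$ sand dies, and symmetrically for $\tau_2$) yields
\[
\Delta\Phi\big|_{\tau_1}=\frac{1-p}{p}R_1-R_2,\qquad
\Delta\Phi\big|_{\tau_2}=\frac{p}{1-p}R_2-R_1.
\]
The conditions $(1-p)R_1\le pR_2$ and $pR_2\le (1-p)R_1$ are logical negations of each other, so at least one of the two quantities above is non-positive; Remover picks the corresponding $\tau_i$. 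Thus $\Phi$ is non-increasing, giving $W\le\Phi_0=x\cdot w(p)$; minimizing over $p$ yields the upper bound.

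\smallskip

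For the lower bound, let $p^*\in\argmin_{p} x\cdot w(p)$ and consider the Pusher strategy that in every round splits the sand so as to realize the \emph{balance condition} $(1-p^*)R_1=p^* R_2$ (for instance, by running a common fraction of the weighted mass on each path, tuned so that the balance holds). Under this balance, both expressions for $\Delta\Phi$ above vanish, so $\Phi(p^*)=x\cdot w(p^*)=\min_{p} x\cdot w(p)$ is preserved throughout the game regardless of Remover's choices. As running sand at level $1$ is transferred to the winning cells, field weight is converted into $W$, and in the asymptotic limit $W\to x\cdot w(p^*)$, matching the upper bound.

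\smallskip

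The hard part is ensuring that the field weight actually decays to zero while the balance is maintained. Running a large fraction of the sand in a single round typically shifts the \emph{true} minimizer $\argmin_{p} x'\cdot w(p)$ for the new state $x'$ away from $p^*$, so $\Phi(p^*)$ overestimates $\min_{p} x'\cdot w(p)$; in the extreme case where one path becomes depleted, the balance becomes infeasible and no further win can be extracted. My plan to get around this is a small-step approximation: Pusher runs only an $\alpha$-fraction of the available sand per round, balanced with respect to the \emph{current} optimal $p^*(x_t)$. The per-round drift of the minimizer, and hence the corresponding loss in the true potential, is $O(\alpha^2)$; summed over the $O(1/\alpha)$ rounds needed to move a comparable amount of sand to the winning cells, the total loss is $O(\alpha)$, which can be made smaller than any prescribed $\epsilon>0$ by taking $\alpha$ small. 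Formalizing this inductive construction, and verifying that the balancing remains feasible until the field weight at $p^*(x_t)$ drops below $\epsilon$, is the main technical obstacle.
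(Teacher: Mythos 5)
Your overall architecture (a potential function $x\cdot w(p)$ for the upper bound, a balanced small-step Pusher strategy for the lower bound) is the same as the paper's. The upper bound half is complete and correct: your identity $\Delta\Phi|_{\tau_1}=\frac{1-p}{p}R_1-R_2$, $\Delta\Phi|_{\tau_2}=\frac{p}{1-p}R_2-R_1$ is exactly the paper's balance identity $r\cdot w(p)=p\,\overrightarrow r(\tau_1)\cdot w(p)+(1-p)\,\overrightarrow r(\tau_2)\cdot w(p)$ in disguise, and fixing $p$ in advance rather than re-minimizing each round (as the paper does) is if anything a cleaner packaging; you only need to dispose of $p\in\{0,1\}$ separately to avoid the division by $p$ or $1-p$.

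The gap is in the lower bound, and it is not a small one: the ``main technical obstacle'' you defer is where essentially all of the paper's work lies. Two things in particular are missing. First, the degenerate case. When the minimizer $p^*$ sits at $0$ or $1$, or when one side is so depleted that the balance condition $(1-p^*)R_1=p^*R_2$ cannot be realized with a nontrivial running part, the small-step scheme gives nothing; the paper handles these arrangements by a completely separate argument (an explicit inductive Pusher strategy showing $E(x)=e(x)$ exactly on degenerate arrangements, plus a Lipschitz transfer to almost-degenerate ones). Your sketch notices this case but does not resolve it. Second, the claim that the per-round loss is $O(\alpha^2)$ is exactly the statement that needs proof. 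It amounts to a second-order Taylor bound $y\cdot w(p^*_x)-e(y)\leq C\,\|x-y\|_1^2/\|x\|_1$, which requires controlling how fast $p^*$ moves under a perturbation of the arrangement, i.e.\ bounding $\partial p^*/\partial x$ via the implicit function theorem. That derivative has $\partial h/\partial p = w''(p^*)\cdot x$ in the denominator, which degenerates precisely near the degenerate arrangements; so the constant $C$ can only be made uniform on the set of $\eps$-regular arrangements, and one must also verify that the small-step moves do not exit that set. Finally, one still needs a quantitative decay estimate (the paper uses the weighted norm $q(v)=\sum_i i|v_i|$, which drops by a constant factor per move) to guarantee the process terminates with the field weight actually delivered to the winning cells. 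Until these three pieces are supplied, the lower bound is a plan rather than a proof.
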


\section{Proof of Proposition 1}
\subsection{Notation}

We denote
$[N]=\{1,\ldots,N\}$, $\mathcal A=\mathbb R_+^{3N+3}\setminus\{0\}$ and $\mathcal{V}=\mathbb R^{3N+3}$.

While considering any game of the described above type, we denote by $E( x)$  the value of this game with the initial arrangement $ x$ to Pusher in the commonly accepted sense of the value, that is, the maximal sum of the golden sand, reached winning cells, if both players play optimally.
The initial (and thus any) arrangement is regarded further as a vector 
$$ 
x=[x_{i,path~j}]_{([N]\cup \{0\})\times\{0,1,2\}}.
$$
%
For a given vector $x\in\mathcal{V}$ denote by $x_{\text{path~j}}, j\in\{0,1,2\}, x\in R_+^{N+1},$  
it's projection on $path~j$. Observe that $w(p)_{path~0}=w(p)_{path~1}+w(p)_{path~2}$ and 
\begin{gather}\label{new_weight}
x\cdot w(p) = (x_{path~1}+x_{path~0})\cdot w(p)_{path~1}+(x_{path~2}+x_{path~0})\cdot w(p)_{path~2}.
\end{gather}
For given mapping $\tau$ and vector $x\in\mathcal{A}$ determine a operation of shifting, $\overrightarrow{x}(\tau)$, that transforms each coordinate  
$x_{n,path~m},n\geq 1$ to the $x_{n-1,\tau(path~m)}$.  We will sometimes omit $\tau$ in cases when it is obvious from the context, which permutation we mean.

By the \emph{norm} of a vector $ v=[v_{i,path~j}]_{([N]\cup \{0\})\times\{0,1,2\}},  v\in\mathcal{V}$ we mean the $\ell_1$ norm, that is the sum of all it's coordinate modules:
$$\| v\|_1=\sum_{ ([N]\cup\{0\})\times \{0,1,2\}} |v_{i, path~j}|.$$

By $\rho( v,M)$ we denote the standard  \emph{distance} from vertex $ v$ to~the set $\mathcal M\subseteq\mathcal{V}$,
$\rho( v,M)=\inf_{m\in \mathcal M}\|{v-m}\|_1.$
We also introduce seminorm
$\| v\|_{1,2}$ equals the sum of all coordinate modules $|v_{i,path~j}|$ with indexes $i\geq 2$ over all paths: 
\begin{gather*}
\| v\|_{1,2}=\sum_{(i,j)\in ([N]\setminus\{1\})\times\{0,1,2\}}|v_{i, path~j}|.
\end{gather*}

For any $ x\in \mathcal A$, we define the infimum of the scalar product $x$ and $w(p)$~ in $p\in[0,1]$:
\begin{gather*}
e( x)=\inf_{p\in[0,1]}  x\cdot  w(p).
\end{gather*}

Theorem~1 claims that $e( x)=E( x)$.

\begin{remark}
Since the weight vector $ w(p)$ has all coordinates no more than one, the scalar product of $ x$ with $ w(p)$ is at most $\| x\|_1$. Hence, $e( x)\leq\| x\|_1$. Moreover, $e( x)=0$ if and only if either $x_{path~1}$ or $x_{path~2}$ vanishes. 
\end{remark}

Finally, denote
\begin{equation}\label{p*}
	p^*_x=\argmin_{p\in[0,1]} x\cdot w(p).
\end{equation} 

\subsection{Lipschitz property of $e$ and $E$}

\begin{proposition}
  \label{W-Lip}
 For any $ x,  y\in\mathcal A$, we have
  $$
    e( x)-e( y)\leq 
\|{x-y}\|_1 \qquad{and}\qquad
    E( x)-E( y)\leq 
\|{x-y}\|_1.
  $$
\end{proposition}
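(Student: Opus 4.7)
The plan is to prove the two inequalities separately: the bound on $e$ is essentially immediate from its definition as an infimum of linear functionals, while the bound on $E$ requires a game-theoretic coupling argument that I will reduce to a monotonicity property and a weak subadditivity property.

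For the $e$-bound, I first verify that every entry of $w(p)$ lies in $[0,1]$: the path-$1$ and path-$2$ entries equal $p^{i}$ or $(1-p)^{i}$, which are trivially bounded by $1$, while the path-$0$ entries satisfy $p^{i}+(1-p)^{i}\leq p+(1-p)=1$ for all $i\geq 1$, and the winning-row entries are $1$ by definition. Consequently, for each fixed $p\in[0,1]$ the linear functional $x\mapsto x\cdot w(p)$ is $1$-Lipschitz with respect to $\|\cdot\|_{1}$. Choosing $p^{*}:=p^{*}_{y}$ (which is attained by compactness of $[0,1]$ and continuity in $p$), I get
\[
e(x)\leq x\cdot w(p^{*})=y\cdot w(p^{*})+(x-y)\cdot w(p^{*})\leq e(y)+\|x-y\|_{1}.
\]

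For the $E$-bound, my plan rests on two auxiliary properties of the game:
\begin{itemize}
\item \emph{Monotonicity:} if $a\leq b$ coordinate-wise, then $E(a)\leq E(b)$.
\item \emph{Weak subadditivity:} for every $a\in\mathcal{A}$ and every $u\geq 0$, $E(a+u)\leq E(a)+\|u\|_{1}$.
\end{itemize}
Granted these, I decompose $x-y=u-v$ into its positive and negative parts $u=(x-y)^{+}$, $v=(y-x)^{+}$, so that $u,v\geq 0$, $\|u\|_{1}+\|v\|_{1}=\|x-y\|_{1}$, and $x+v=y+u$. Then
\[
E(x)\leq E(x+v)=E(y+u)\leq E(y)+\|u\|_{1}\leq E(y)+\|x-y\|_{1},
\]
as required. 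The monotonicity step is straightforward via strategy transfer: on arrangement $b$, Pusher earmarks an $a$-worth of sand in each cell and runs an $\varepsilon$-optimal strategy for the game on $a$ on this portion, while always keeping the remaining $(b-a)$-worth standing. Each combined split is a valid split of $b$, the $a$-earmarked sand evolves exactly as in a stand-alone play on $a$ under any Remover sequence, and an $\varepsilon$-optimal Pusher strategy guarantees at least $E(a)-\varepsilon$ against every Remover; letting $\varepsilon\to 0$ gives $E(b)\geq E(a)$.

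The main obstacle is weak subadditivity, which is where the coupling really does work. My plan is to pass to a labeled variant of the game on $a+u$ in which the sand carries a type ($a$-type or $u$-type) visible to both players; since making information available can only enlarge Pusher's value, it suffices to bound the labeled game's value by $E(a)+\|u\|_{1}$. Given any Pusher strategy in the labeled game, Remover plays an $\varepsilon$-optimal strategy $\rho^{*}$ for the stand-alone game on $a$, responding only to the $a$-labeled part of each Pusher split. Because a single mapping $\tau$ drives both color classes, the $a$-labeled sand evolves as a genuine play of the game on $a$ in which $\rho^{*}$ is applied to a legitimate (induced) Pusher strategy; hence Pusher's $a$-typed winnings are at most $E(a)+\varepsilon$. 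Pusher's $u$-typed winnings are trivially bounded by the total $u$-typed mass $\|u\|_{1}$, since each unit of sand can reach the winning row at most once and every winning-cell weight equals $1$. Adding the two contributions and sending $\varepsilon\to 0$ gives $E(a+u)\leq E(a)+\|u\|_{1}$, completing the proof.
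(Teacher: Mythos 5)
Your proof is correct and follows essentially the same route as the paper: the $e$-bound is the standard argument of evaluating $x\cdot w(p^*_y)$ and using that all coordinates of $w(p)$ lie in $[0,1]$, and the $E$-bound is the same coupling in which Remover treats the surplus sand as ``fake'' and plays a near-optimal strategy for the smaller arrangement, losing at most the surplus mass. The only difference is organizational --- the paper reduces to arrangements differing in a single coordinate, while you split $x-y$ into positive and negative parts and make the monotonicity step and the labeled-game bookkeeping explicit, which is arguably a cleaner write-up of the same idea.
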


Indeed we prove that $E( x)-E(y)$ and $e( x)-e( y)$ does not exceed the sum of non-negative coordinates of the vector ${x-y}$.
\begin{proof}
\begin{gather*}
e( x)-e( y)= x\cdot w(p^*_x)- y\cdot w(p^*_y)=  x\cdot w(p^*_x)- x\cdot w(p^*_y)+{(x-y)}\cdot w(p^*_y)\leq \|{x-y}\|_1,
\end{gather*}
where we used that  $p^*_x$ is argminimum of $ x\cdot  w(p)$, and so $ x\cdot  w(p^*_x)- x\cdot  w(p^*_y)\leq 0$.

Now prove the second part of Proposition \ref{W-Lip}.
It is enough to prove when $ y$ differs from $ x$ only in one coordinate. Without loss of generality, let $(i, path~1)$, $i\in[N]$ be the desired coordinate, such as $x_{i,path~1}>y_{i,path~1}$ and other coordinates of vectors $ x$ and $ y$ are equal. Assume for a moment that Remover knows a strategy how to play  with the initial arrangement $ y$, so that Pusher couldn't win more than $E( y)$. Denote this strategy by $R( y)$. Then for the arrangement $ x$ Remover can apply the following strategy: he divides gold sand on the position $i$ of path $1$  into two parts size of $(x_{i,path~1}-y_{i,path~1})$ and $y_{i,path~1}$. Remover applies his strategy $R( y)$ by imaging the first part of gold sand is fake and totally ignoring it. Then the final Pusher'win will be at most $E( y)$ plus the ignored gold $(x_{i,path~1}-y_{i,path~1})$. Proposition \ref{W-Lip} is proved.
\end{proof}

\bigskip
\let\eps\varepsilon
In the next chapter we give an explicit Remover's strategy not allowing Pusher to win more than $e( x)$.

\subsection{Remover's strategy}\label{Remover's strategy}

 Let $r$ be the running part chosen by Pusher in some move. Due to~\eqref{new_weight} we have
\begin{multline}
  p^*_xr\cdot w(p^*_x)+(1-p^*_x)r\cdot w(p^*_x)
  = r\cdot w(p^*_x)\\ 
  = (r_{path~1}+r_{path~0})\cdot w(p^*_x)_{path~1}
    +(r_{path~2}+r_{path~0})\cdot w(p^*_x)_{path~2}\\
  = p^*_x\overrightarrow r(\tau_1)\cdot w(p^*_x)
    + (1-p^*_x)\overrightarrow r(\tau_2)\cdot w(p^*_x).
  \label{balance}
\end{multline}
Therefore, one of the inequalities
\begin{align}
  p^*_xr\cdot w(p^*_x) 
  &\geq  (r_{path~1}+r_{path~0})\cdot w(p^*_x)_{path~1} 
    = p^*_x\overrightarrow r(\tau_1)\cdot w(p^*_x),
  \tag{$\tau_1$}\\
  (1-p^*_x)r\cdot w(p^*_x) 
  &\geq  (r_{path~2}+r_{path~0})\cdot w(p^*_x)_{path~2}
    = (1-p^*_x)\overrightarrow r(\tau_2)\cdot w(p^*_x),
  \tag{$\tau_2$}
\end{align}
should hold. In either case, Remover applies the strategy indicated in the brackets at the right. (If $p^*_x=0$, then Remover applies $\tau_2$ as the second inequality also holds; the case $p^*_x=1$ is treated similarly.)

This way, we get
\begin{observation}\label{obs}
Let $\tau$ be the Remover's choice indicated above. Then
$$r\cdot w(p^*_x)\geq \overrightarrow{r}(\tau)\cdot w(p^*_x).$$
\end{observation}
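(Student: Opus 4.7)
The plan is to read the claim directly off the balance identity \eqref{balance}. That identity expresses $r\cdot w(p^*_x)$ as the convex combination $p^*_x\cdot r\cdot w(p^*_x)+(1-p^*_x)\cdot r\cdot w(p^*_x)$, and simultaneously as $p^*_x\,\overrightarrow{r}(\tau_1)\cdot w(p^*_x)+(1-p^*_x)\,\overrightarrow{r}(\tau_2)\cdot w(p^*_x)$. First I would observe that at least one of the inequalities $(\tau_1)$, $(\tau_2)$ must hold: if both failed strictly, summing the reversed strict inequalities with the nonnegative weights $p^*_x$ and $1-p^*_x$ would contradict \eqref{balance}. This makes Remover's rule ``choose the $\tau_i$ whose bracketed inequality is satisfied'' well-defined.

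Next I would unweight the chosen inequality. Suppose $\tau=\tau_1$. If $p^*_x>0$, divide $(\tau_1)$ through by $p^*_x$ to obtain $r\cdot w(p^*_x)\geq\overrightarrow{r}(\tau_1)\cdot w(p^*_x)$; the algebraic identity $(r_{path~1}+r_{path~0})\cdot w(p^*_x)_{path~1}=p^*_x\,\overrightarrow{r}(\tau_1)\cdot w(p^*_x)$ tacitly used above is a direct consequence of \eqref{new_weight} together with two structural facts: that $w(p)_{i-1,path~1}=p^{-1}w(p)_{i,path~1}$ (so the index drop absorbs exactly one factor of $p^*_x$), and that $\tau_1$ sends paths $0$ and $1$ both into path $1$ while sending path $2$ into $dead$, which carries no weight. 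The case $\tau=\tau_2$ is symmetric with $1-p^*_x$ replacing $p^*_x$ and paths $0,2$ replacing paths $0,1$.

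Finally I would dispose of the endpoints $p^*_x\in\{0,1\}$ already flagged in the parenthetical remark preceding the Observation. For $p^*_x=0$ the $p^*_x$-weighted summands in \eqref{balance} vanish and the identity collapses to $r\cdot w(0)=\overrightarrow{r}(\tau_2)\cdot w(0)$, so the prescribed choice $\tau=\tau_2$ satisfies the claim with equality; $p^*_x=1$ is symmetric, settled by $\tau_1$. The whole argument is thus a two-line pigeonhole on a convex combination, so I do not anticipate a real obstacle; the only mildly delicate point is the rewrite of $(r_{path~1}+r_{path~0})\cdot w(p^*_x)_{path~1}$ in terms of $\overrightarrow{r}(\tau_1)\cdot w(p^*_x)$, which is worth spelling out so that the chain ``$(\tau_1)$ holds $\Rightarrow$ divide by $p^*_x$ $\Rightarrow$ Observation'' becomes fully transparent.
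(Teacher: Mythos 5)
Your argument is correct and is essentially the paper's own proof: the paper likewise derives the Observation by noting that the identity \eqref{balance} forces at least one of the weighted inequalities $(\tau_1)$, $(\tau_2)$ to hold, then unweights the chosen one by dividing by $p^*_x$ or $1-p^*_x$, and disposes of the endpoints $p^*_x\in\{0,1\}$ exactly as in your last paragraph. Your explicit check of the identity $(r_{path~1}+r_{path~0})\cdot w(p^*_x)_{path~1}=p^*_x\,\overrightarrow{r}(\tau_1)\cdot w(p^*_x)$ merely spells out the equality the paper already asserts inside the display, so nothing is missing.
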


\bigskip
\noindent
Next proposition says that $e(x)=\inf_p x\cdot w(p)$ never increases during the Remover's algorithm.

\bigskip
\begin{proposition}\label{'-0}
	Assume that an arrangement $ y$ obtained from $ x$ by a Pusher's move and Remover's response according to his strategy. Then $ e( x)\geq e( y).$
\end{proposition}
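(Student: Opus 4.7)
The plan is to evaluate the infimum defining $e(y)$ at the single test point $p=p^*_x$, so that $e(y)\leq y\cdot w(p^*_x)$ by definition, and then to bound $y\cdot w(p^*_x)$ from above by $x\cdot w(p^*_x)=e(x)$ by chaining Observation~\ref{obs} through the shift and the cleanup of winning/dead cells.

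Concretely, I would decompose Pusher's move as $x=s+r$ (standing plus running, both coordinate-wise non-negative) and write $\tau$ for the shift chosen by the Remover's strategy of Section~\ref{Remover's strategy}. After the round the state is $s+\overrightarrow{r}(\tau)$, and $y$ is obtained from it by deleting the sand in the winning cells $(0,path~m)$, $m\in\{0,1,2\}$ (sand routed to the dead path has already been discarded by the shift). Since every coordinate of $w(p^*_x)$ is non-negative, each such deletion only lowers the scalar product, yielding
\[
y\cdot w(p^*_x)\ \leq\ s\cdot w(p^*_x)+\overrightarrow{r}(\tau)\cdot w(p^*_x).
\]

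Now Observation~\ref{obs} is tailor-made for the right-hand side: it gives $\overrightarrow{r}(\tau)\cdot w(p^*_x)\leq r\cdot w(p^*_x)$ for precisely the $\tau$ picked by the strategy. Plugging this in,
\[
y\cdot w(p^*_x)\ \leq\ s\cdot w(p^*_x)+r\cdot w(p^*_x)\ =\ x\cdot w(p^*_x)\ =\ e(x),
\]
and composing with $e(y)\leq y\cdot w(p^*_x)$ delivers $e(y)\leq e(x)$.

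I do not foresee a real obstacle, since the whole argument is a short chain whose only substantive input is Observation~\ref{obs}. The lone caveat is the boundary case $p^*_x\in\{0,1\}$, but this is already addressed just before Observation~\ref{obs} — one of $(\tau_1),(\tau_2)$ then holds automatically by non-negativity, so Remover's strategy still produces a valid $\tau$ for which the required inequality on $\overrightarrow{r}(\tau)\cdot w(p^*_x)$ goes through.
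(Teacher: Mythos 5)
Your proposal is correct and follows essentially the same route as the paper: evaluate $e(y)$ at the test point $p^*_x$, use Observation~\ref{obs} to bound $\overrightarrow{r}(\tau)\cdot w(p^*_x)\leq r\cdot w(p^*_x)$, and reassemble to get $y\cdot w(p^*_x)\leq x\cdot w(p^*_x)=e(x)$. The only difference is cosmetic: you additionally note that discarding the sand in winning/dead cells can only decrease the scalar product (since $w(p^*_x)\geq 0$), a detail the paper elides by identifying $y$ with $(x-r)+\overrightarrow{r}(\tau)$ directly.
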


\begin{proof}
	Since
	$$y=\overrightarrow{r}(\tau)+(x-r),$$
	we have
	$$e(x)=x\cdot w(p^*_x)=r\cdot w(p^*_x)+(x-r)\cdot w(p^*_x)\geq$$
using Observation \ref{obs}
$$\overrightarrow{r}\cdot w(p^*_x)+(x-r)\cdot w(p^*_x)=y\cdot w(p^*_x)\geq y\cdot w(p^*_y)=e(y).$$
\end{proof}

Let us now explain why above Remover's strategy not allowing Pusher to win more than $e(x)$. Clearly, when the game ends, the weight of final configuration is not less than Pusher's win, the value $E(x)$. On the other hand, by Proposition \ref{'-0}, $e(x)$ never increases. Hence, Remover cannot win more than $e(x)$. 

\subsection{Idea of the proof of inequality $E\geq e$}

We have already shown that Remover has a strategy not allowing Pusher to win more than $e(x)$. Now we concentrate on proving the converse. For this purpose, we will show that for any $\eps\in(0,1)$, the inequality
$$
E(x)\geq e(x)-2\eps N\|x\|_1
\eqno(*)
$$
holds. 
The main idea is to show that in most of the arrangements, Pusher can perform a move such that $e(x)$ changes much slower than $\|x\|$ --- a precise formulation is given in Lemma~\ref{move} below. We start with the discussion of exceptional arrangements, i.e., those on which the strategy provided in Lemma~\ref{move} does not work.


\subsection{Degenerate arrangements}


Before presenting degenerate arrangements, let us determine a function $h(x,p)$.
\begin{equation}\label{h}
	h( x,p)=\frac\partial{\partial p} w(p)\cdot x=  w'(p)\cdot x.
\end{equation}

\begin{proposition}	\label{convex}
For any $ x\in\mathcal A$ with $\| x\|_{1,2}>0$, the function $h(x,p)$ is strictly increasing in~$p$. Moreover, the value $p^*_x$ is determined by the equation $h( x,p^*_x)=0$, unless $p^*_x\in\{0,1\}$.
\end{proposition}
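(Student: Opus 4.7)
My plan is to prove this by direct calculus on the explicit definition of $w(p)$.

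The first step is to compute the second derivative $\partial h/\partial p=\partial^2(x\cdot w)/\partial p^2$ by differentiating each weight $w(p)_{i,\,\text{path }j}$ twice in $p$. For $i=0$ the three weights are constants, and for $i=1$ the three weights $1$, $p$, $1-p$ are affine in $p$, so only the indices $i\geq 2$ contribute. A direct computation then yields
$$\frac{\partial h}{\partial p}(x,p)=\sum_{i\geq 2}i(i-1)\Bigl(x_{i,\,\text{path }0}\bigl(p^{i-2}+(1-p)^{i-2}\bigr)+x_{i,\,\text{path }1}\,p^{i-2}+x_{i,\,\text{path }2}\,(1-p)^{i-2}\Bigr).$$

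The second step is to observe that every coefficient appearing in this sum is non-negative because $x\in\mathcal A$, and that for each $p\in(0,1)$ each bracketed factor is strictly positive. The hypothesis $\|x\|_{1,2}>0$ supplies at least one pair $(i,j)$ with $i\geq 2$ and $x_{i,\,\text{path }j}>0$, so the corresponding summand is strictly positive and hence $\partial h/\partial p>0$ on the open interval $(0,1)$. By continuity of the polynomial $h(x,\cdot)$, this gives strict monotonicity on the whole closed interval $[0,1]$, which is the first assertion. Equivalently, $p\mapsto x\cdot w(p)$ is strictly convex on $[0,1]$.

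The third step handles the characterization of $p^*_x$. Strict convexity already forces the minimizer to be unique. If $p^*_x$ lies in the interior $(0,1)$, the standard first-order necessary condition for a smooth interior minimum reads $h(x,p^*_x)=0$. Conversely, any root of $h(x,\cdot)$ inside $(0,1)$ is the unique critical point of a strictly convex smooth function on $[0,1]$, and therefore its global minimum. This exactly matches the statement, and the endpoints $p^*_x\in\{0,1\}$ are precisely the cases in which $h(x,\cdot)$ keeps a constant sign on $(0,1)$, which is why they are excluded.

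I do not anticipate any real obstacle in this argument: the proposition reduces to a one-line second-derivative computation plus convex analysis. The only point that has to be handled with some care is the dichotomy between interior and boundary minimizers, which is also the reason the proposition explicitly carves out the case $p^*_x\in\{0,1\}$.
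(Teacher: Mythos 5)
Your proof is correct and follows essentially the same route as the paper: the paper notes that $x\cdot w(p)$ is a sum of convex functions of which at least one (an $i\geq 2$ term) is strictly convex, while you verify the same fact by writing out $\partial h/\partial p$ explicitly, and both arguments then conclude via the first-order condition for the unique minimizer of a strictly convex function on $[0,1]$.
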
 

\begin{proof}
For $i>1$ each of the functions $p\mapsto p^i$ and $p\mapsto (1-p)^i$ is strictly convex. Assuming that $\| x\|_2>0$, we get that the function $ x\cdot  w(p)$ is the sum of convex functions, one of which is strictly convex. Thus $x\cdot  w(p)$ is strictly convex and so, attains a unique local minimum (with respect to~$p$) on $[0,1]$, which is determined by the equation $\frac\partial{\partial p}  x\cdot  w(p(x))|_{p=p^*_x}=h( x,p^*_x)=0$ unless $p^*_x\in\{0,1\}$. 	 
\end{proof}

\bigskip
We say that an arrangement $x\in\mathcal A$ is \emph{degenerate} if either $\|x\|_{1,2}=0$, or $p^*_x\in\{0,1\}$. The latter condition, for $\|x\|_{1,2}>0$,  holds if either $h(x,0)\geq 0$ or $h(x,1)\leq 0$, i.e.,

\begin{align*}\label{degen-def}
& \text{either} \qquad \sum_{i>0}i(x_{i,path~2}+x_{i,path~0})\leq x_{1,path~1}+x_{1,path~0} \\
&\qquad \text{or}\qquad  \sum_{i>0}i(x_{i,path~1}+x_{i,path~0})\leq x_{1,path~2}+x_{1,path~0}.
\end{align*}

Notice that in case $h(x,0)\geq 0$ values $x_{i,path~1}$ for $i>1$  does not affect whether the arrangement is degenerate or not. Similarly, in case  $h(x,1)\leq 0$.
 
\bigskip
 We say $x$ is \emph{negatively (resp. positively) degenerate} if $x_{1,path~1}+x_{1,path~0}\geq \sum_{i>0}i(x_{i,path~2}+x_{i,path~0})$
 (resp. $x_{1,path~2}+x_{1,path~0}\geq \sum_{i>0}i(x_{i,path~1}+x_{i,path~0})$).
 Denote the set of degenerate arrangements by~$\mathcal D$ and set of positively and negatively arrangement by $\mathcal D_{+}$ and $\mathcal D_{-}$, respectively. Notice that $\mathcal D$, $\mathcal D_{+}$ and $\mathcal D_{-}$ are closed. 

Say that an arrangement $x\in\mathcal A$ is \emph{regular} if it is not degenerate. Denote by $\mathcal R=\mathcal A\setminus\mathcal D$ the set of regular arrangements, and notice that $\mathcal R$ is open and convex (as $\mathcal R$ is determined by a system of linear inequalities).

\begin{proposition}
	\label{degen-win}
	For any degenerate arrangement $x\in\mathcal D$ we have $E(x)=e(x)$.
\end{proposition}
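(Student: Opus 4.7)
The plan is to use the symmetry between paths~$1$ and $2$ (equivalently $p \leftrightarrow 1-p$) to reduce to a negatively degenerate arrangement $x\in \mathcal D_-$, so that $p^*_x = 0$ and hence $e(x) = x\cdot w(0) = \sum_{i}a_i + b_0 + \sum_{i}c_i$, where we abbreviate $a_i=x_{i,path~0}$, $b_i=x_{i,path~1}$, $c_i=x_{i,path~2}$. The defining inequality then rewrites as $b_1 \geq \sum_{i \geq 2} i\,a_i + \sum_{i \geq 1} i\,c_i$. Since the bound $E(x) \leq e(x)$ is already established via the Remover's strategy, only the converse $E(x)\geq e(x)$ has to be proved.

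I would induct on the integer measure $\mu(x) := \max\{i \geq 1 : a_i + c_i > 0\}$ (with $\mu(x) := 0$ when the set is empty). The base case $\mu(x) = 0$ is immediate: then $e(x) = a_0 + b_0 + c_0$, and Pusher collects this sum directly from the initial winning cells. For the inductive step, I propose the following Pusher move: send every $a_i$ and $c_i$ with $i\geq 1$ running; send the quantity $r := \sum_{i \geq 2} a_i + \sum_{i \geq 1} c_i$ from $b_1$ running (which is feasible because $r \leq b_1$ by the hypothesis); and let all remaining sand stand.

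Analyzing the two Remover replies in turn: under $\tau_1$, a routine computation gives Pusher's round-win as $a_0 + b_0 + c_0 + a_1 + r$, which simplifies to exactly $e(x)$; the resulting arrangement $y^{\tau_1}$ carries only path-$1$ mass, so $e(y^{\tau_1}) = 0$ and $E(y^{\tau_1}) \geq 0$ closes this branch. Under $\tau_2$, the round-win is $a_0 + b_0 + c_0 + a_1 + c_1$, and the new state satisfies $y_{1,path~1} = b_1 - r$, $y_{i,path~1} = b_i$ for $i \geq 2$, $y_{i,path~2} = a_{i+1} + c_{i+1}$ for $i \geq 1$, with path~$0$ empty. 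A direct bookkeeping should confirm that $y^{\tau_2}$ is again negatively degenerate; then $e(y^{\tau_2}) = \sum_{j \geq 2}(a_j + c_j)$, and the round-win plus $e(y^{\tau_2})$ telescopes to $e(x)$. Because $\mu(y^{\tau_2}) = \mu(x) - 1$, the inductive hypothesis provides $E(y^{\tau_2}) \geq e(y^{\tau_2})$, completing the step.

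The main obstacle I foresee is the algebraic preservation of the negatively degenerate inequality along the $\tau_2$ branch: specifically, one has to verify $b_1 - r \geq \sum_{j \geq 2}(j-1)(a_j + c_j)$, which upon substituting the definition of $r$ should reduce exactly to the original hypothesis. This bookkeeping identity is what makes the induction close cleanly. The case $\|x\|_{1,2}=0$ fits naturally into the framework, since then $p \mapsto x\cdot w(p)$ is linear and attains its minimum at an endpoint, placing $x$ in $\mathcal D_-$ or $\mathcal D_+$; the positively degenerate case follows by the noted symmetry.
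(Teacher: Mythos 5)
Your proof is correct and follows essentially the same route as the paper's: induction on the largest occupied index, with Pusher sending all sand on the two non-dominant paths running together with a compensating amount taken from position $1$ of the dominant path, chosen exactly so that the arrangement remains (negatively) degenerate after the unfavourable Remover response, while the favourable response yields the full value immediately. The bookkeeping identity you flag, $b_1 - r \geq \sum_{j\geq 2}(j-1)(a_j+c_j)$, does reduce directly to the degeneracy hypothesis after substituting $r=\sum_{i\geq 2}a_i+\sum_{i\geq 1}c_i$, so the induction closes as you predict.
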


\begin{proof}
 We may assume that $x$ is positively degenerate. Then $e(x)=\sum_{i\geq 0} (x_{i,path~1}+x_{i,path~0})$. So, it remains to prove that $E(x)=\sum_{i\geq 0} (x_{i,path~1}+x_{i,path~0})$. We prove the inequality by induction on the maximal index $i$, such that $x_{i,path~1}+x_{i,path~0}\neq 0$. It is trivial for $i = 1$. Let us determine two functions:
\begin{equation*}
	d(x)=\sum_{i\geq 0} i(x_{i,path~1}+x_{i,path~0}) \qquad \text{and} \qquad s(x)=\sum_{i\geq 0} (x_{i,path~1}+x_{i,path~0}).
\end{equation*}
 Then, being positively degenerate is equivalent to $x_{1,path~2}\geq d(x)-x_{1,path~0}$.  Pusher applies the following move: he takes all gold sand from $path~0$ and $path~1$ and $s(x)-x_{1,path~0}$ of gold sand from $(1,path~2)$ as running parts. Then, either Pusher's win on just this step is equal to $\sum_{i\geq 0} (x_{i,path~1}+x_{i,path~0})$ and Proposition \ref{degen-win} is proved, or 
Remover deletes running parts from $path~2$ and Pusher wins $x_{1,path~1}+x_{1,path~0}$. In the second case, we get a new arrangement $x'$, such as
$$d(x')=d(x)-s(x),$$
$$s(x')=s(x)-(x_{1,path~1}+x_{1,path~0}),$$
$$E(x)\geq E(x')-(x_{1,path~1}+x_{1,path~0}).$$
Note that $x'$ is also positively degenerate. Indeed, 
$$x'_{1,path~2}=x_{1,path~2}-(s(x)-x_{1,path~0})\geq d(x)-s(x)=d(x')=d(x')-x'_{1,path~0},$$
where at the end we used that due to Pusher's move $x'_{path~0}=0$. 

So we apply induction hypothesis to $x'$. 
In fact, we prove that $E(\cdot)\geq s(\cdot)$ and we control $d(\cdot)$ just to guarantee that a given arrangement is positively degenerate.  Finally note that we have equality in $E(x)\geq e(x)=s(x)$, since in Chapter \ref{Remover's strategy} we already proved $E(x)\leq e(x)$.
\end{proof}

Next, take any $\eps\in(0,1)$. Say that and arrangement $x\in\mathcal A$ is \emph{$\eps$-degenerate} if $\rho(x,\mathcal D)\leq\eps\|x\|_1$. The set $\mathcal D_\eps$ of $\eps$-degenerate arrangements is also closed, and its complement $\mathcal R_\eps=\mathcal A\setminus\mathcal D_\eps$ is open and convex. The arrangements in $\mathcal R_\eps$ are referred to as \emph{$\eps$-regular arrangements}.

\begin{proposition}
	\label{e-deg}
	For every $\eps>0$, any $\eps$-degenerate arrangement~$x\in\mathcal D_\eps$ satisfies 
	$$E(x)\geq e(x)-2\eps\|x\|_1.$$
\end{proposition}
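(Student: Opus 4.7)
The plan is to reduce the statement to the already-established degenerate case by approximation. Since $\rho(x,\mathcal D)\leq\eps\|x\|_1$ and $\mathcal D$ is closed, I would begin by selecting $x'\in\mathcal D$ achieving this infimum, so that $\|x-x'\|_1\leq\eps\|x\|_1$. The assumption $\eps\in(0,1)$ together with the triangle inequality immediately gives $\|x'\|_1\geq(1-\eps)\|x\|_1>0$, so $x'$ is a bona fide element of $\mathcal A$ and both $E(x')$ and $e(x')$ are defined.

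Next I would apply Proposition~\ref{degen-win} to obtain $E(x')=e(x')$, and then invoke Proposition~\ref{W-Lip} twice. Once in the direction $E(x')-E(x)\leq\|x-x'\|_1$, which yields $E(x)\geq E(x')-\eps\|x\|_1=e(x')-\eps\|x\|_1$; and once in the direction $e(x)-e(x')\leq\|x-x'\|_1$, which yields $e(x')\geq e(x)-\eps\|x\|_1$. Chaining these gives
$$E(x)\ \geq\ e(x')-\eps\|x\|_1\ \geq\ e(x)-2\eps\|x\|_1,$$
as required.

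There is essentially no obstacle: the proposition is a soft ``sandwich'' argument that uses closedness of $\mathcal D$ to export Proposition~\ref{degen-win} from $\mathcal D$ to its $\eps$-neighbourhood via the Lipschitz continuity of $E$ and $e$, and it cleanly separates the easy degenerate case from the forthcoming inductive treatment of regular arrangements. The only points worth double-checking are that the Lipschitz bound in Proposition~\ref{W-Lip} is effectively two-sided (which follows by interchanging the roles of $x$ and $y$) and that $x'$ really lies in $\mathcal A$, both handled above.
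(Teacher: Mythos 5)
Your proposal is correct and follows essentially the same route as the paper: pick a nearest degenerate arrangement, use Proposition~\ref{degen-win} to get $E=e$ there, and transfer back via the Lipschitz property of $E$ and $e$ from Proposition~\ref{W-Lip}. The only addition is your (worthwhile) check that the approximating point is nonzero and hence lies in $\mathcal A$, which the paper leaves implicit.
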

\begin{proof}
Choose $y \in \mathcal{D}$ with $\|x-y\|_1 \leq \varepsilon\|x\|_1$. By the Lipschitz property of $E$ and $e$ (Proposition \ref{W-Lip}), we have
$$
E(x) \geq E(y)-\|x-y\|_1=e(y)-\|x-y\|_1 \geq e(x)-2\|x-y\|_1 \geq e(x)-2 \varepsilon\|x\|_1,
$$
as desired.
\end{proof}

\subsection{Behaviour of the functions $p^*$, $x\cdot w(p)$, and $e$ on $\eps$-regular vectors}

The analytic properties exhibited in this section are crucial for constructing Pusher's strategy on $\eps$-regular arrangements. 

Without loss of generality, we assume that $p\leq 1-p$.
In what follows, we use a bound
\begin{equation}
\label{h_x}
\left|\frac{\partial h}{\partial x_{i,path~j}}(x,p)\right|\leq i\leq N.
\end{equation}
which is a trivial consequence of~\eqref{h}.

\begin{proposition}
	\label{reg-good}
	For any $x\in\mathcal R_\eps$, $\|x\|_{1,2}$ and each of $\|x_{path~j}+x_{path~0}\|_1,j\in\{1,2\}$ are at least $\eps\|x\|_1$.
\end{proposition}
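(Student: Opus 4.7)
The plan is to prove each of the three inequalities by contraposition: assuming one of the quantities is strictly less than $\eps\|x\|_1$, I will exhibit a degenerate vector $y\in\mathcal D$ with $\|x-y\|_1<\eps\|x\|_1$, which contradicts the defining condition $\rho(x,\mathcal D)>\eps\|x\|_1$ of $x\in\mathcal R_\eps$.

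For $\|x\|_{1,2}\geq\eps\|x\|_1$ I take $y$ to agree with $x$ on the coordinates of index $i\in\{0,1\}$ and to vanish on the coordinates of index $i\geq 2$. Then $\|x-y\|_1=\|x\|_{1,2}$ and $\|y\|_{1,2}=0$. The corner case $y=0$ would force $\|x\|_1=\|x\|_{1,2}<\eps\|x\|_1$, impossible for $\eps<1$; otherwise $y\in\mathcal A\cap\mathcal D$ by the first clause of the definition of degeneracy.

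For $\|x_{path~1}+x_{path~0}\|_1\geq\eps\|x\|_1$, first note that if $x_{path~2}=0$ then $\|x\|_1=\|x_{path~1}+x_{path~0}\|_1$ and the inequality is automatic since $\eps<1$. Otherwise, let $y$ be obtained from $x$ by zeroing out the $path~0$ and $path~1$ components entirely. Non-negativity of all entries gives $\|x-y\|_1=\|x_{path~1}+x_{path~0}\|_1$, and $y\in\mathcal A$ because $y_{path~2}=x_{path~2}\neq 0$. If $\|y\|_{1,2}=0$ then $y\in\mathcal D$ directly; otherwise, from \eqref{h} I compute
$$h(y,1)=\sum_{i>0}i(y_{i,path~1}+y_{i,path~0})-(y_{1,path~2}+y_{1,path~0})=-y_{1,path~2}\leq 0,$$
so by the strict monotonicity of $h(y,\cdot)$ supplied by Proposition~\ref{convex}, the minimum of $y\cdot w(p)$ on $[0,1]$ is attained at $p^*_y=1$, whence $y\in\mathcal D_+$. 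The remaining bound $\|x_{path~2}+x_{path~0}\|_1\geq\eps\|x\|_1$ follows by the symmetric construction, zeroing out $x_{path~0}$ and $x_{path~2}$ to obtain $y\in\mathcal D_-$ with $h(y,0)=y_{1,path~1}\geq 0$ and thus $p^*_y=0$.

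I do not expect a serious obstacle: the proposition is a structural check asserting that each of the three possible ``failure modes'' of degeneracy (mass concentrated at indices $i\in\{0,1\}$; minimum of $y\cdot w(p)$ forced to the left endpoint; or to the right endpoint) corresponds to exactly one of the three quantities being small, and the proof reduces to picking the right witness $y$ for each mode and verifying the sign of $h(y,\cdot)$ at a boundary point.
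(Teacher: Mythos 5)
Your proof is correct and follows essentially the same route as the paper's (one-sentence) argument: truncate to zero whichever block of coordinates has total mass below $\eps\|x\|_1$ and observe that the truncation is degenerate and within distance $\eps\|x\|_1$ of $x$. You additionally supply the verification the paper omits — checking $y\neq 0$ and confirming degeneracy of the truncated vector via the sign of $h(y,\cdot)$ at the endpoints — which is exactly the right bookkeeping.
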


\begin{proof}
We modify $x$ by vanishing the coefficients which less than $\eps\|x\|_1$. By this we get a generate arrangement $y$ with $\|x-y\|_1\leq \eps\|x\|_1$, which contradicts $x\in\mathcal R_\eps$.
\end{proof}

\begin{proposition}\label{<p>}
  All $x\in\mathcal R_{\eps}$ satisfy $p^*_x\in(Q_{\eps, N},1-Q_{\eps, N})$ where $Q_{\eps, N}=\eps/(2N^2).$ 
\end{proposition}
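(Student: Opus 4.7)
The plan is to argue by contradiction: if some $x\in\mathcal R_\eps$ has $p^*_x\leq Q_{\eps,N}$, I will exhibit a $y\in\mathcal D_-$ with $\|x-y\|_1\leq\eps\|x\|_1$, placing $x$ in $\mathcal D_\eps$ and contradicting $x\in\mathcal R_\eps$. The symmetric case $p^*_x\geq 1-Q_{\eps,N}$ is handled by interchanging the roles of $path~1$ and $path~2$ (and the sets $\mathcal D_-,\mathcal D_+$). The starting observation, obtained by writing $h(x,0)=w'(0)\cdot x$ out in coordinates, is that
\[
  h(x,0)\;=\;x_{1,path~1}-\sum_{i\geq 2}i\,x_{i,path~0}-\sum_{i\geq 1}i\,x_{i,path~2},
\]
so the condition $h(x,0)\geq 0$ coincides exactly with the negatively-degenerate inequality; equivalently, $h(x,0)\geq 0\iff x\in\mathcal D_-$. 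Since $x\in\mathcal R_\eps\subseteq\mathcal R$, Proposition~\ref{convex} gives $h(x,p^*_x)=0$ with $p\mapsto h(x,p)$ strictly increasing, hence under the contradiction hypothesis $h(x,0)\leq 0$, and the whole task reduces to bounding $|h(x,0)|$.

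For that bound I differentiate once more: $\partial h/\partial p=w''(p)\cdot x$. Each entry of $w''(p)$ at index $i$ is one of $i(i-1)p^{i-2}$, $i(i-1)(1-p)^{i-2}$, or their sum, and is therefore bounded in absolute value by $2i(i-1)\leq 2N^2$ uniformly for $p\in[0,1]$ and all path indices. Consequently $|\partial h/\partial p|\leq 2N^2\|x\|_1$, and the mean value theorem on $[0,p^*_x]$ gives
\[
  |h(x,0)|\;=\;h(x,p^*_x)-h(x,0)\;\leq\;2N^2\|x\|_1\cdot p^*_x\;\leq\;2N^2\|x\|_1\cdot\frac{\eps}{2N^2}\;=\;\eps\|x\|_1.
\]

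To close the argument, I use that the coefficient of $x_{1,path~1}$ in $h(\cdot,0)$ equals $1$ (an instance of \eqref{h_x}, since $w(p)_{1,path~1}=p$). Define $y$ by $y_{1,path~1}=x_{1,path~1}+|h(x,0)|$ and leave every other coordinate of $x$ unchanged; then $h(y,0)=h(x,0)+|h(x,0)|=0$, so $y\in\mathcal D_-\subseteq\mathcal D$, and $\|x-y\|_1=|h(x,0)|\leq\eps\|x\|_1$. Therefore $\rho(x,\mathcal D)\leq\eps\|x\|_1$, i.e.\ $x\in\mathcal D_\eps$ --- the desired contradiction. The only step that is not pure bookkeeping is the identification of $h(x,0)$ with the negatively-degenerate defect (and analogously $h(x,1)$ with the positively-degenerate defect for the other boundary); once that linear-algebra observation is in place, the bound on $Q_{\eps,N}$ is precisely what the factor $2N^2$ from $\|\partial h/\partial p\|$ demands, and the rest is a one-variable MVT estimate.
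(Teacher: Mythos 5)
Your proof is correct and follows essentially the same route as the paper: both argue by contradiction, showing that if $p^*_x$ were within $Q_{\varepsilon,N}$ of an endpoint of $[0,1]$, then increasing the single coordinate $x_{1,path~1}$ (resp.\ $x_{1,path~2}$) by at most $\varepsilon\|x\|_1$ would produce a degenerate arrangement, contradicting $x\in\mathcal R_\varepsilon$. The only difference is in the technical estimate of the defect: you bound $|h(x,0)|$ via the mean value theorem with $|\partial h/\partial p|\leq 2N^2\|x\|_1$, whereas the paper expands $h(x,Q_{\varepsilon,N})$ term by term and uses Bernoulli's inequality --- both yield the same threshold $Q_{\varepsilon,N}=\varepsilon/(2N^2)$.
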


\begin{proof}
  Note that $(1-Q_{\eps, N})^N\geq 1-\frac{\eps}{2N}$ due to Bernoulli's inequality.
	
	In view of convexity of scalar product $x\cdot w(p)$ in $p$ (Proposition~\ref{convex}), it suffices to show that $h(x,Q_{\eps, N})\leq 0$ and $h(x,1-Q_{\eps, N})\geq 0$. We prove the first inequality; the proof of the second one is similar. We suppose by contradiction that $h(x,Q_{\eps, N})> 0$ for some $x\in R_{\eps}$. Then
 \begin{multline}\label{h(x,Q)}
    0<h(x,Q_{\eps, N})=\sum_{i>0}i(x_{i,path~1}+x_{i,path~0})Q_{\eps, N}^{i-1}-\sum_{j>0}j(x_{j,path~2}+x_{j,path~0})(1-Q_{\eps, N})^{j-1}\leq\\
    x_{1,path~1}+x_{1,path~0}+\frac{\eps}{2N}\sum_{i>1}i(x_{i,path~1}+x_{i,path~0})-\left(1-\frac{\eps}{2N}\right)\sum_{j>0}j(x_{j,path~2}+x_{j,path~0})
      \leq \\
       x_{1,path~1}+x_{1,path~0}-\sum_{j>0}j(x_{j,path~2}+x_{j,path~0})+\frac{\eps}{2N}\sum_{i>1}i(x_{i,path~1}+x_{i,path~2}+2x_{i,path~0}).
  \end{multline}
Consider vector $y,y_{1,path~1}=x_{1,path~1}+\frac{\eps}{2N}\sum_{i>1}i(x_{i,path~1}+x_{i,path~2}+2x_{i,path~0})$ and $y_{i,path~j}=x_{i,path~j}$ for all other $(i,j)$. By using inequality (\ref{h(x,Q)}), we get that 
$$\sum_{j>0}j(y_{j,path~2}+y_{j,path~0})\leq y_{1,path~1}+y_{1,path~0},$$
that means that $y$ is degenerate. Furthermore, $\|x-y\|_1\leq\eps\|x\|_1$. Consequently, $x$ is $\eps$-degenerate, which contradicts $x\in\mathcal R_\eps$. Hence, $h(x,Q_{\eps, N})\leq 0$ for all $x\in R_{\eps}$ as desired.
\end{proof}

\def\grad{\mathop{\mathrm{grad}}}
\begin{lemma}\label{p_x-p_y}
	For any $\eps>0$ and $N\in\mathbb{N}$, there exists a positive number $P_{\varepsilon, N}$, such that for any vectors $x,y\in\mathcal R_\eps$, we have
	$$
	|p^*_x-p^*_y|\leq \frac{P_{\eps,N}\|x-y\|_1}{\min(\|x\|_1,\|y\|_1)}.
	$$
\end{lemma}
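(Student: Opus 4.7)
The plan is to exploit the implicit characterization of $p^*_x$. By Proposition~\ref{convex} combined with Proposition~\ref{<p>}, for any $x\in\mathcal R_\eps$ the value $p^*_x$ is the unique zero of $p\mapsto h(x,p)$, and moreover this zero lies in the open interval $(Q_{\eps,N},1-Q_{\eps,N})$. Since $h(x,p)=w'(p)\cdot x$ is linear in~$x$, the difference $p^*_x-p^*_y$ can be controlled by a standard implicit-function-style decomposition: starting from $h(x,p^*_x)=h(y,p^*_y)=0$, write
\begin{equation*}
0=h(y,p^*_y)-h(x,p^*_x)=h(y-x,p^*_x)+\bigl[h(y,p^*_y)-h(y,p^*_x)\bigr],
\end{equation*}
and estimate the two bracketed quantities separately.

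\textbf{Step 1 (upper bound on the $x$-variation).} By linearity of $h$ in its vector argument together with the derivative bound \eqref{h_x}, I would get $|h(y-x,p^*_x)|\le N\|x-y\|_1$ immediately.

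\textbf{Step 2 (lower bound on $\partial h/\partial p$).} The mean value theorem applied to the second bracket produces an intermediate point $\xi$ strictly between $p^*_x$ and $p^*_y$; by Proposition~\ref{<p>} both endpoints, and hence $\xi$ itself, lie in $(Q_{\eps,N},1-Q_{\eps,N})$. Differentiating \eqref{h} once more,
\begin{equation*}
\frac{\partial h}{\partial p}(y,p)=\sum_{i\ge 2}i(i-1)\bigl[(y_{i,path~0}+y_{i,path~1})p^{i-2}+(y_{i,path~0}+y_{i,path~2})(1-p)^{i-2}\bigr],
\end{equation*}
and since $p,1-p\ge Q_{\eps,N}$ and $i\le N$ force $p^{i-2},(1-p)^{i-2}\ge Q_{\eps,N}^{N-2}$, this sum is bounded below by $2Q_{\eps,N}^{N-2}\|y\|_{1,2}$. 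Proposition~\ref{reg-good} applied to $y\in\mathcal R_\eps$ then upgrades this to $2\eps Q_{\eps,N}^{N-2}\|y\|_1$.

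\textbf{Step 3 (assemble).} Dividing Step~1 by Step~2 gives $|p^*_x-p^*_y|\le N\|x-y\|_1/(2\eps Q_{\eps,N}^{N-2}\|y\|_1)$, and the same argument with the roles of $x$ and $y$ swapped places $\|x\|_1$ in the denominator, so one may replace the denominator by $\min(\|x\|_1,\|y\|_1)$ and take $P_{\eps,N}=N/(2\eps Q_{\eps,N}^{N-2})$. The main obstacle is Step~2: without $\eps$-regularity, $\|y\|_{1,2}$ could vanish while $\|y\|_1$ stays large, making $\partial h/\partial p$ arbitrarily small and ruining any Lipschitz estimate; likewise, without the interior bound $p^*_y\in(Q_{\eps,N},1-Q_{\eps,N})$, the factors $p^{i-2},(1-p)^{i-2}$ could be arbitrarily small. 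It is precisely the combined strength of Propositions~\ref{reg-good} and~\ref{<p>}, both of which rely on $x,y\notin\mathcal D_\eps$, that keeps $\partial h/\partial p$ uniformly away from zero.
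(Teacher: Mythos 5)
Your argument is correct. It rests on the same two quantitative inputs as the paper's proof --- the bound $|h(y-x,p)|\leq N\|x-y\|_1$ coming from \eqref{h_x}, and a lower bound on $\frac{\partial h}{\partial p}$ of the form $\mathrm{const}\cdot Q_{\eps,N}^{N-2}\|y\|_{1,2}\geq \mathrm{const}\cdot\eps Q_{\eps,N}^{N-2}\|y\|_1$ obtained from Propositions~\ref{reg-good} and~\ref{<p>} --- but it assembles them differently. The paper differentiates $p^*$ via the Implicit Function Theorem, bounds $\|\grad p^*_a\|$ pointwise, and then integrates along the segment $[x,y]$ using a mean value theorem for vector-valued functions; this forces it to invoke the convexity of $\mathcal R_\eps$ so that every intermediate point $a\in[x,y]$ remains $\eps$-regular, and it picks up an extra $\sqrt{3N}$ from converting between the $\ell_1$ and Euclidean norms. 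Your finite-difference decomposition $0=h(y-x,p^*_x)+\bigl[h(y,p^*_y)-h(y,p^*_x)\bigr]$, which is legitimate because $h$ is linear in its vector argument, needs only the one-variable mean value theorem in $p$ and only uses $\eps$-regularity at the two endpoints $x$ and $y$; it is the more elementary route and yields a cleaner constant. One small simplification you could make in Step~3: since $\min(\|x\|_1,\|y\|_1)\leq\|y\|_1$, the single estimate $|p^*_x-p^*_y|\leq N\|x-y\|_1/\bigl(2\eps Q_{\eps,N}^{N-2}\|y\|_1\bigr)$ already implies the stated inequality, so the symmetrization in $x$ and $y$ is not needed.
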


\begin{proof}
	Recall that the function $p^*$ is implicitly defined on $\mathcal R_\eps$ by the equation $h(x,p^*_x)=0$. Therefore, by the Implicit function theorem, for any $(i,j)\in |N|\times\{0,1,2\}$ and $a\in\mathcal \mathcal R_\eps$ we have
	$$
	\frac{\partial p^*_a}{\partial x_{i, path~j}}=-\frac{\frac{\partial h}{\partial x_{i,path~j}}(a,p^*_a)}{\frac{\partial h}{\partial p}(a,p^*_a)};
	$$
	the denominator does not vanish, due to Proposition~\ref{convex}.
	
	It follows from \eqref{h_x} that the absolute value of the numerator does not exceed $N$. On the other hand,
	\begin{equation}
	\label{h'}
	\frac{\partial h}{\partial p}(a,p^*_a)
	=w''(p^*_a)\cdot a\geq \left(p^*_a\right)^{N-2}\|a\|_{1,2},
	\end{equation}
	where we used that $p\leq(1-p).$
	
	Therefore,
	\begin{gather}\label{bb}
	\left|\frac{\partial p^*_a}{\partial x_{i,path~j}}\right|\leq \frac{N\left(p^*_a\right)^{2-N}}{\|a\|_{1,2}}.
	\end{gather}
	
	Since $x,y\in\mathcal R_\eps$, all their coordinates are positive. So, for any vector $a\in[x,y]$, we have $\|a\|_{1,2}\geq\min(\|x\|_{1,2},\|y\|_{1,2})$. Moreover, $a$ is $\eps$-regular, since $\mathcal R_\eps$ is convex; so $p^*_a\geq Q_{\eps, N}$ by Proposition~\ref{<p>}. Therefore, denoting by $\|v\|_2$ the Euclidean norm of a vector $v$, by means of Lagrange's mean value theorem for vector-valued function (for example, see \cite{Matk}), we have
	\begin{multline*}
	|p^*_x-p^*_y|\leq \|x-y\|_2 \cdot\max_{a\in[x,y]}\|\grad p^*_a\|_2\leq \|x-y\|_1 \cdot\max_{a\in[x,y]}\|\grad p^*_a\|_2
	\leq \|x-y\|_1\cdot \max_{a\in[x,y]}\sqrt{3N}\frac{N\left(p^*_a\right)^{2-N}}{\|a\|_{1,2}}\\
	\leq \|x-y\|_1\cdot\frac{\sqrt {3N}\cdot NQ_{\eps, N}^{2-N}}{\min(\|x\|_{1,2},\|y\|_{1,2})}\leq \frac{\sqrt{3}N^{3/2}Q_{\eps, N}^{2-N}}{\eps} \cdot\frac{\|x-y\|_1}{\min(\|x\|_1,\|y\|_1)},
	\end{multline*}
	where in the third inequality we upper bounded Euclidean norm of $\grad p^*_a=\left[\frac{\partial p^*_a}{\partial x_{i,path~j}}\right]_{i,j}$ by using (\ref{bb}), and in the last inequality we used Proposition \ref{reg-good}.
\end{proof}

\begin{lemma}\label{yw(p)-e(y)}
	For any $\eps>0$ and $N\in\mathbb{N}$, there exists a positive $C_{\varepsilon, N}$, such that for any vectors $x,y\in R_\eps$ with $\frac{\|x\|_1}{2}\leq \|y\|_1\leq \|x\|_1$, we have
	$$
	y\cdot w(p^*_x)-e(y)\leq C_{\eps, N}\frac{\|x-y\|^2_1}{\|x\|_1}.
	$$
	\label{Taylor}
\end{lemma}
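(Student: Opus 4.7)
The plan is to obtain the bound from a second-order Taylor expansion of the one-variable convex function $\phi(p) := y \cdot w(p)$ about its minimizer $p^*_y$. The crucial observation is that the first-order term vanishes. Indeed, since $y \in \mathcal R_\eps$, Proposition \ref{<p>} yields $p^*_y \in (Q_{\eps,N},\, 1 - Q_{\eps,N}) \subset (0,1)$, so $p^*_y$ is an interior critical point of the strictly convex function $\phi$, and Proposition \ref{convex} gives $\phi'(p^*_y) = h(y, p^*_y) = 0$. Taylor's theorem with Lagrange remainder therefore produces some $\xi$ between $p^*_x$ and $p^*_y$ with
\begin{equation*}
y \cdot w(p^*_x) - e(y) \;=\; \phi(p^*_x) - \phi(p^*_y) \;=\; \tfrac{1}{2}\,(p^*_x - p^*_y)^2 \,\bigl(y \cdot w''(\xi)\bigr).
\end{equation*}

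Next I would bound the two factors on the right separately. For the second-derivative factor, every coordinate of $w''(p)$ is of the form $i(i-1)\, p^{i-2}$, $i(i-1)(1-p)^{i-2}$, or the sum of these, with $i \leq N$, hence bounded in absolute value by $2N^2$ uniformly in $p \in [0,1]$; consequently $|y \cdot w''(\xi)| \leq 2 N^2 \|y\|_1 \leq 2 N^2 \|x\|_1$. For the squared difference of minimizers, Lemma \ref{p_x-p_y} together with the assumption $\|y\|_1 \geq \|x\|_1/2$ yields
\begin{equation*}
|p^*_x - p^*_y| \;\leq\; \frac{P_{\eps,N}\,\|x-y\|_1}{\min(\|x\|_1,\,\|y\|_1)} \;\leq\; \frac{2 P_{\eps,N}\,\|x-y\|_1}{\|x\|_1}.
\end{equation*}

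Combining the two estimates gives
\begin{equation*}
y \cdot w(p^*_x) - e(y) \;\leq\; \tfrac{1}{2} \cdot \frac{4\,P_{\eps,N}^2\,\|x-y\|_1^2}{\|x\|_1^2} \cdot 2 N^2 \|x\|_1 \;=\; \frac{4\, N^2 P_{\eps,N}^2\,\|x-y\|_1^2}{\|x\|_1},
\end{equation*}
so the lemma holds with $C_{\eps,N} = 4 N^2 P_{\eps,N}^2$.

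The argument is essentially routine once the linear Taylor coefficient is shown to vanish; this is the only delicate point, and it rests precisely on the $\eps$-regularity of $y$ through Proposition \ref{<p>}, which forces $p^*_y$ away from the boundary of $[0,1]$. Without interiority one could only hope for a linear-in-$\|x-y\|_1$ estimate (the Lipschitz bound of Proposition \ref{W-Lip}), whereas the quadratic improvement here is what will allow Pusher's incremental moves, taken in the upcoming Lemma~\ref{move}, to make $e(x)$ decrease much more slowly than $\|x\|_1$.
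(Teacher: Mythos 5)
Your proof is correct and follows essentially the same route as the paper: a second-order Taylor expansion of $p\mapsto y\cdot w(p)$ about $p^*_y$ (where the first-order term vanishes because $p^*_y$ is an interior critical point, guaranteed by $\eps$-regularity), followed by Lemma~\ref{p_x-p_y} to control $(p^*_x-p^*_y)^2$ and a crude uniform bound $|y\cdot w''(\xi)|=O(N^2\|x\|_1)$. The only cosmetic difference is that the paper first normalizes $\|x\|_1=1$ by homogeneity, which you skip without loss; your constant differs from the paper's by an irrelevant factor.
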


\begin{proof}
	Since the claim of the lemma is dimension-free (both parts multiply by $\lambda$ under the change $(x,y)\mapsto (\lambda x,\lambda y)$), we may assume that $\|x\|=1$, so $\|y\|\in\bigl[\frac12,1\bigr]$.
	
	Next, we have $h(y,p^*_y)=0$. By using Taylor's formula with Lagrange remainder for function $y\cdot w(\cdot)$ in the point $p^*_y$, we have
\begin{gather*}
	y\cdot w(p^*_x)-e(y)=y\cdot w(p^*_x)-y\cdot w(p^*_y)
	\leq \frac{\bigl(p^*_x-p^*_y\bigr)^2}2 \max_{\text{$q$ between $p^*_x$ and $p^*_y$}}\left|\frac{\partial h}{\partial p}(y,q)\right|.
\end{gather*}
We apply Lemma \ref{p_x-p_y} and use $\bigl|\frac\partial{\partial p}h(a,q)\bigr|\leq N^2\|a\|_1$ for any $q\in[0,1]$ and $a\in\mathcal A$ (refer to the explicit expansion of $\frac{\partial}{\partial p}h(a,p)$).
\begin{gather*}
	\frac{\bigl(p^*_x-p^*_y\bigr)^2}2 \max_{\text{$q$ between $p^*_x$ and $p^*_y$}}\left|\frac{\partial h}{\partial p}(y,q)\right|\leq \frac{P_{\eps, N}^2\|x-y\|^2_1}{2\|y\|^2_1} N^2\|y\|_1\leq N^2P_{\eps, N}^2\frac{\|x-y\|^2_1}{\|x\|_1},
\end{gather*}
as desired.
\end{proof}

Recall that by $r$ we denote a running vector and $r_{path~j}$ denotes the projection of running vector $r$ on $path~j$.

\section{Proof of Proposition 1}
\bigskip
For $v\in\mathbb{R}^{3|N|+3}$ determine a new norm
$$
q(v)=\sum_{i>0} i|v_i|. 
$$

From now on, we fix an arbitrary $\eps\in(0,1)$. 

Formula~\eqref{balance} from Subsection~\ref{Remover's strategy} hints that, in order to (almost) preserve the weight of the arrangement, it is convenient for Pusher to choose the shift vector $r$ satisfying
$$
  r\cdot w(p^*_x) 
  =\frac1{p^*_x}(r_{path~1}+r_{path~0})\cdot w(p^*_x)_{path~1} 
    = \overrightarrow r(\tau_1)\cdot w(p^*_x),
$$
In view of~\eqref{balance}, this yields that
$$
  r\cdot w(p^*_x) 
  =\frac1{1-p^*_x}(r_{path~2}+r_{path~0})\cdot w(p^*_x)_{path~2}
    = \overrightarrow r(\tau_2)\cdot w(p^*_x)
$$
as well. In other words, if Pusher chooses $r$ as the running part at arrangement~$x$, then for every response $\tau$ of Remover we have
$$
  \bigl((x-r)+\overrightarrow r(\tau)\bigr)\cdot w(p^*_x)
    =x\cdot w(p^*_x),
$$
i.e., the $p^*_x$-weight of the arrangement does not change. We say that such vector $r\in\mathcal A$ is \emph{balanced with respect to $x$}. (Surely, we always assume that $r_{0,\path j}=0$ for all $j$.)

\bigskip
The following two  lemmas describes an ``almost optimal'' Pusher's strategy on an $\eps$-regular arrangement. We start with choosing a ``direction'' of the shift, and then we find an appropriate multiple of that direction as an actual shift.

\begin{lemma}
  \label{direction-IB}
  For any $x\in\mathcal R_\eps$, there is a vector $d\in \mathcal A$ balanced with respect to~$x$ such that $x-d\in\mathcal A$ and $q(d)\geq \frac1Nq(x)$.
\end{lemma}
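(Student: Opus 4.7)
The plan is to recognize that the balance condition on $d$ reduces to a single linear equation — one that is close to but \emph{not} the optimality condition $h(d,p^*_x)=0$ — and then to construct $d$ by a linear-programming-style argument exploiting the identity $h(x,p^*_x)=0$. A brief computation using~\eqref{balance} shows that balance for $d$ is equivalent to the single equation
$$
B(d,p^*_x):=\sum_{i\geq 1}(d_{i,path~0}+d_{i,path~1})(p^*_x)^{i-1}-\sum_{i\geq 1}(d_{i,path~0}+d_{i,path~2})(1-p^*_x)^{i-1}=0.
$$
Writing $B(d,p^*_x)=\sum_{i,j}a_{i,j}d_{i,path~j}$, the coefficients are $a_{i,0}=(p^*_x)^{i-1}-(1-p^*_x)^{i-1}$, $a_{i,1}=(p^*_x)^{i-1}$, $a_{i,2}=-(1-p^*_x)^{i-1}$. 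Crucially, $h(x,p^*_x)=\sum_{i,j}i\,a_{i,j}\,x_{i,path~j}$ is the \emph{same} linear form weighted by an additional factor $i$, and equals $0$ by Propositions~\ref{<p>} and~\ref{convex}. The two forms sharing coefficient signs but differing by the weight $i$ is precisely why scalar multiples of $x$ are not automatically balanced.

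Assuming WLOG $p^*_x\leq 1/2$, split the level-$\geq 1$ part $\tilde x$ of $x$ as $\tilde x=x^++x^-+x^0$ according to the sign of $a_{i,j}$, and set $B^\pm=B(x^\pm,p^*_x)$, $M=\max(B^+,|B^-|)$. If $M>0$, take
$$
d=\alpha^+x^++\alpha^-x^-+x^0,\qquad \alpha^+=|B^-|/M,\ \ \alpha^-=B^+/M,
$$
so that $B(d,p^*_x)=\alpha^+B^++\alpha^-B^-=0$ directly and $0\leq d\leq\tilde x\leq x$. If $M=0$, then $x^+=x^-=0$ and $\tilde x=x^0$; take $d=(1-\tfrac1N)\tilde x$, which is balanced because every non-zero entry has $a_{i,j}=0$.

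The key numerical fact is $\min(\alpha^+,\alpha^-)\geq 1/N$. Splitting $h(x,p^*_x)=0$ by the sign of $a_{i,j}$ gives $H^+=|H^-|$, where $H^\pm$ are the analogues of $B^\pm$ carrying the extra factor $i$; since $1\leq i\leq N$, one has $H^+\in[B^+,NB^+]$ and $|H^-|\in[|B^-|,N|B^-|]$. Equating yields $|B^-|\leq NB^+$ and $B^+\leq N|B^-|$, so $\alpha^\pm\in[1/N,1]$. Consequently
$$
q(d)-\tfrac{q(x)}N=\bigl(\alpha^+-\tfrac1N\bigr)q(x^+)+\bigl(\alpha^--\tfrac1N\bigr)q(x^-)+\bigl(1-\tfrac1N\bigr)q(x^0)\geq 0.
$$
Membership $d\in\mathcal A$ follows because $x^-\neq 0$ — otherwise $h(x,0)\geq 0$ would contradict $\eps$-regularity via Proposition~\ref{<p>} — together with $\alpha^-\geq 1/N>0$. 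The requirement $x-d\in\mathcal A$ is automatic when $\min(\alpha^+,\alpha^-)<1$, as a residual piece of $x^+$ or $x^-$ survives; in the boundary case $B^+=|B^-|$ together with $x_{0,\cdot}=0$, fall back on $d=(1-\tfrac1N)\tilde x$, for which $x-d=(1/N)\tilde x\neq 0$ by Proposition~\ref{reg-good} and $q(d)=(1-\tfrac1N)q(x)\geq q(x)/N$ for $N\geq 2$.

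The main obstacle is the subtle mismatch between the balance form $B$ and the optimality form $h$: the factor $i$ present in $h$ but absent in $B$ both rules out the naive choice $d\propto x$ and — via the range $i\in\{1,\ldots,N\}$ — pins down the exact constant $1/N$ appearing in the magnitude bound.
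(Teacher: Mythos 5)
Your argument is correct, but it takes a considerably longer road than the paper's. You correctly isolate the crux --- the balance form $B(d,p^*_x)=\sum_{i,j}a_{i,j}\,d_{i,path~j}$ and the optimality form $h(x,p^*_x)=\sum_{i,j}i\,a_{i,j}\,x_{i,path~j}$ differ exactly by the weight $i$ --- but you resolve the mismatch by a sign decomposition of $x$ and an interpolation of the coefficients $\alpha^\pm$, extracting the constant $1/N$ from the two-sided bounds $H^+\in[B^+,NB^+]$ and $|H^-|\in[|B^-|,N|B^-|]$. The paper instead absorbs the weight into $d$ itself: it sets $d_{k,path~j}=\frac kN x_{k,path~j}$, so that $B(d,p^*_x)=\frac1N h(x,p^*_x)=0$ immediately, $0\leq d\leq x$ is obvious, and $q(d)=\frac1N\sum_{k,j}k^2x_{k,path~j}\geq\frac1N q(x)$; this one-line construction is where the factor $1/N$ in the statement really comes from. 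One small blemish in your write-up: the claim that $x^-\neq 0$ because otherwise $h(x,0)\geq 0$ fails when $p^*_x=\frac12$, since then $a_{i,0}=0$ for all $i$ and all of $x_{path~0}$ lands in $x^0$ (for instance, an $\eps$-regular arrangement supported on $(2,path~0)$ has $x^-=0$ yet $h(x,0)<0$). Your proof survives because in the case $M>0$ the identity $H^+=|H^-|$ already forces both $x^+$ and $x^-$ to be nonzero, while the case $M=0$ is covered by your fallback $d=\bigl(1-\frac1N\bigr)\tilde x$; but the justification as stated is not right. Finally, note that in the way the lemma is used (Lemma~\ref{move} takes $r=\mu d$ with $\mu<1$) only componentwise non-negativity of $x-d$ matters, so your extra care about $x-d\neq 0$ is harmless but not essential --- indeed the paper's own $d$ can have $x-d=0$ when all mass sits at level $N$.
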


\begin{proof}
  Recall that, by the definition of $p^*_x$, we have $\frac\partial{\partial p} w(p)\big|_{p=p^*_x}\cdot x=0$, or
  $$
    0=\frac\partial{\partial p} w(p)\bigg|_{p=p^*_x}\cdot x=\sum_{k>0}k\bigl((x_{k,path~1}+x_{k,path~0})(p^*_x)^{k-1}
      -(x_{k,path~2}+x_{k,path~0})(1-p^*_x)^{k-1}\bigr).
  $$
  This means that
  \begin{equation}
    \label{r-balanced}
    \sum_{k>0}k(x_{k,path~1}+x_{k,path~0})(p^*_x)^{k-1}
    =\sum_{k>0}k(x_{k,path~2}+x_{k,path~0})(1-p^*_x)^{k-1}.
  \end{equation}
  
  Now we define the vector $d$ by
  $$
    d_{k,\path j}=\frac kN x_{k,\path j}.
  $$
  Clearly, both $d$ and $x-d$ lie in~$\mathcal A$, and $q(d)\geq \frac1Nq(x)$. Finally, formula~\eqref{r-balanced}  reads $\overrightarrow d(\tau_1)\cdot w(p^*_x)=\overrightarrow d(\tau_2)\cdot w(p_x^*)$, which by~\eqref{balance} means that $d$ is balanced with respect to~$x$, as desired.
\end{proof}

\begin{lemma}
  \label{move}
  For any $x\in\mathcal R_\eps$, Pusher can perform a move such that, after an arbitrary response of Remover, the resulting arrangement $y$ satisfies the following conditions:
  \begin{gather}
     e(x)-e(y)\leq \eps(q(x)-q(y)),
     \label{eps}\\
    q(y)\leq (1-\delta_{\eps,N})q(x), 
    \label{delta}
  \end{gather}
  where $\delta_{\eps,N}$ is a constant depending only on $\eps$ and $N$.
\end{lemma}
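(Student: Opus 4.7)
The plan is to have Pusher push a small multiple of the balanced direction provided by Lemma~\ref{direction-IB}. I would let $d$ be such a vector and let Pusher choose $r=\lambda d$ as the running part, where $\lambda\in(0,1/2]$ is a constant depending only on $\eps$ and $N$, to be fixed at the end. Balance of $d$ combined with identity~\eqref{balance} will force
$\overrightarrow d(\tau_1)\cdot w(p^*_x)=\overrightarrow d(\tau_2)\cdot w(p^*_x)=d\cdot w(p^*_x)$,
so that for every Remover's response $\tau$ the new arrangement $y=x-\lambda d+\lambda\overrightarrow d(\tau)$ satisfies
$y\cdot w(p^*_x)=x\cdot w(p^*_x)=e(x)$,
independently of Remover's choice.

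Two elementary estimates will then do the bookkeeping. Since $d_{k,path~j}=\tfrac{k}{N}x_{k,path~j}$, one has $\|d\|_1=q(x)/N$, hence $\|x-y\|_1\leq 2\lambda\|d\|_1=2\lambda q(x)/N$. Inspecting each running piece $\lambda d_{k,path~j}$ with $k\geq 1$, its contribution to $q$ drops by at least $\lambda d_{k,path~j}$ (by exactly this amount if the piece survives the shift, and by $k\lambda d_{k,path~j}$ if it lands on $dead$), giving $q(x)-q(y)\geq\|r\|_1=\lambda q(x)/N$. Next, since $\mathcal R_\eps$ is cut out by $\rho(\cdot,\mathcal D)>\eps\|\cdot\|_1$, the triangle inequality yields $\rho(y,\mathcal D)\geq(\eps-2\lambda)\|x\|_1$, which places $y$ in $\mathcal R_{\eps/2}$ whenever $\lambda\leq\eps/4$; the requirement $\|x\|_1/2\leq\|y\|_1\leq\|x\|_1$ is immediate for $\lambda\leq 1/2$. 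Lemma~\ref{yw(p)-e(y)}, applied with regularity parameter $\eps/2$, therefore gives
\[
e(x)-e(y)=y\cdot w(p^*_x)-e(y)\leq C_{\eps/2,N}\cdot\frac{\|x-y\|_1^2}{\|x\|_1}\leq \frac{4C_{\eps/2,N}\lambda q(x)}{N\|x\|_1}\bigl(q(x)-q(y)\bigr).
\]

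Using $q(x)\leq N\|x\|_1$, inequality~\eqref{eps} reduces to $4C_{\eps/2,N}\lambda\leq\eps$, so setting $\lambda:=\min\bigl(1/2,\,\eps/4,\,\eps/(4C_{\eps/2,N})\bigr)$ will secure both~\eqref{eps} and, through $q(y)\leq(1-\lambda/N)q(x)$, inequality~\eqref{delta} with $\delta_{\eps,N}=\lambda/N$. The subtle point --- and the place I expect the main friction --- is the preservation of $\eps$-regularity of the arrangement under a non-trivial Pusher move, which is handled at the modest cost of replacing $\eps$ by $\eps/2$ in the regularity parameter feeding into $C_{\eps/2,N}$; once that loss is accepted, everything else is a careful accounting of the three explicit estimates above.
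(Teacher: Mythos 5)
Your proposal is correct and follows essentially the same route as the paper's proof: push a small constant multiple $\lambda d$ of the balanced direction from Lemma~\ref{direction-IB} with $\lambda=\min\bigl(\eps/4,\eps/(4C_{\eps/2,N})\bigr)$ (up to constants), verify that the result stays in $\mathcal R_{\eps/2}$, and combine $q(x)-q(y)\geq\|r\|_1\geq\|x-y\|_1/2$ with Lemma~\ref{yw(p)-e(y)} and the balance identity $y\cdot w(p^*_x)=x\cdot w(p^*_x)$. The only cosmetic differences are your exact identity $\|d\|_1=q(x)/N$ (the paper uses the weaker $\|d\|_1\geq q(x)/N^2$, hence $\delta_{\eps,N}=\mu/N^2$ instead of your $\lambda/N$) and the harmless imprecision that $\|y\|_1\geq\|x\|_1/2$ actually needs $\lambda\leq 1/4$ rather than $\lambda\leq 1/2$, which your final choice $\lambda\leq\eps/4<1/4$ satisfies anyway.
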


\begin{proof}
  Choose a vector $d\in\mathcal A$ satisfying the requirements in Lemma~\ref{direction-IB}. We define the actual vector of Pusher's shift as
  $$
    r=\mu d, \qquad\text{where}\quad
    \mu=\min\left\{\frac\eps4,\frac\eps{4C_{\eps/2,N}}\right\},
  $$
  where the constant $C_{\eps/2,N}$ is taken from Lemma~\ref{yw(p)-e(y)}. Notice here that $\|r\|_1\leq\frac\eps4\|d\|_1\leq\frac\eps4\|x\|_1$.

  \medskip
  Denote by $y_i=(x-r)+\overrightarrow r(\tau_i)$, $i=1,2$, the two possible configurations after a Remover's response. Observe that $\|y_i\|_1\leq\|x\|_1$ and
  \begin{equation}
    \label{x-y<}
    \|x-y_i\|_1\leq\|r\|_1+\|\overrightarrow r(\tau_i)\|_1
    \leq 2\|r\|_1
    \leq 2\mu\|x\|_1
    \leq \frac\eps2\|x\|_1;
  \end{equation}
  in particular, this yields $\|y_i\|_1\geq\frac{\|x\|_1}2$.
    
  Suppose, for the sake of contradiction, that some $y_i$ is $\eps/2$-degenerate. so that there exists a $z\in\mathcal D$ such that $\|y-z\|_1\leq \frac\eps2\|y\|_1$. Then
  $$
    \|x-z\|_1\leq\|x-y\|_1+\|y-z\|_1
    \leq \frac\eps2\|x\|_1+\frac\eps2\|x\|_1
    =\eps\|x\|_1,
  $$
  so $x$ was $\eps$-degenerate. This contradicts the assumptions of the Lemma. Hence $y_1,y_2\in\mathcal R_{\eps/2}$. 
  
  \bigskip
  Set $y=y_i$ and $\tau=\tau_i$ for an arbitrary $i\in\{1,2\}$. Our next aim is to establish~\eqref{delta}.
  
  Indeed, we have
  \begin{equation}
    \label{q-q}
    q(x)-q(y)=q(r)-q(\overrightarrow r(\tau))
    \geq \sum_{j=0}^2 \sum_{i>0}ir_{i,\path j}
    -\sum_{j=0}^2\sum_{i>0} (i-1)r_{i,\path j}
    \geq \|r\|_1.
  \end{equation}
  Since $\|d\|_1\geq \frac1N q(d)\geq\frac1{N^2}q(x)$, we obtain
  $$
    q(x)-q(y)\geq \mu\|d\|_1\geq\frac\mu{N^2}q(x),
  $$
  which shows that \eqref{delta} holds with
  $$
    \delta_{\eps,N}=\frac{\mu}{N^2}.
  $$
  Notice here that~\eqref{q-q} yields also that
  \begin{equation}
    q(x)-q(y)\geq\|r\|_1\geq\frac{\|x-y\|_1}2,
    \label{q-q<x-y}
  \end{equation}
  due to~\eqref{x-y<}.

  \bigskip
  Finally, we are about to prove~\eqref{eps}.
  Since $d$ is balanced with respect to~$x$, we have $y\cdot w(p^*_x)=x\cdot w(p^*_x)$, so that
  $$
    e(x)-e(y)=x\cdot w(p_x^*)-e(y)=y\cdot w(p_x^*)-e(y).
  $$
  Since $\frac{\|x\|_1}2\leq\|y\|_1\leq\|x\|_1$, we can apply Lemma~\ref{yw(p)-e(y)} to estimate the right-hand part as
  $$
    y\cdot w(p_x^*)-e(y)\leq C_{\varepsilon/2,N}\frac{\|x-y\|^2_1}{\|x\|_1}
    =C_{\varepsilon/2,N}\frac{\|x-y\|_1}{\|x\|_1}
      \|x-y\|_1.
  $$
  Recall that $\mu C_{\eps/2,N}\leq \frac\eps4$ and $\|x-y\|_1\leq 2\mu\|x\|_1$ by~\eqref{x-y<}; hence the above inequality extends as
  $$
    y\cdot w(p_x^*)-e(y)
    \leq C_{\eps/2,N}\cdot 2\mu\cdot \|x-y\|_1
    \leq\frac\eps2\|x-y\|_1\leq \eps(q(x)-q(y)),
  $$
  where the last inequality holds by~\eqref{q-q<x-y}. This proves~\eqref{eps}.
\end{proof}

\begin{proposition}
  For any $\eps\in(0,1)$ and any $x\in\mathcal A$, the inequality ~$e(x)-E(x)\leq 2N\varepsilon\|x\|_1$ holds. Thus, $e(x)=E(x)$.
\end{proposition}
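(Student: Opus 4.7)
We already have $E(x)\le e(x)$ from Section~\ref{Remover's strategy}, so it remains to prove $E(x)\ge e(x)-2N\eps\|x\|_1$ for every $\eps\in(0,1)$; letting $\eps\to 0$ then gives $E=e$. My plan is to combine Proposition~\ref{e-deg} (for $\eps$-degenerate arrangements) with an iterated application of Lemma~\ref{move} (on $\eps$-regular arrangements) via a telescoping potential of the form $W+e$, where $W$ is Pusher's accumulated win.

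\textbf{Pusher's strategy and the telescoping.} Fix $\eps$ and set $x_0=x$. At each step $i$, Pusher inspects $x_i$: if $x_i\in\mathcal D_\eps$ he switches to the strategy of Proposition~\ref{e-deg}; otherwise he plays the move of Lemma~\ref{move}. Let $k$ be the first step at which the switch occurs ($k=\infty$ if it never does). For $i<k$, denote by $y_i$ the arrangement right after Remover's response (still carrying the sand that has reached winning cells), by $\Delta W_{i+1}$ the total winning-cell mass of $y_i$ (which is collected by Pusher at this step, giving $W_{i+1}=W_i+\Delta W_{i+1}$ with $W_0=0$), and by $x_{i+1}$ the post-collection arrangement. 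The key bookkeeping identity is
$$
y_i\cdot w(p)\ \equiv\ x_{i+1}\cdot w(p)+\Delta W_{i+1},
$$
valid for every $p$ because $w(p)_{0,\,j}=1$ at each non-dead $j$; this gives $e(y_i)=e(x_{i+1})+\Delta W_{i+1}$, and since $q$ ignores position~$0$ also $q(y_i)=q(x_{i+1})$. Plugging both into inequality~\eqref{eps} of Lemma~\ref{move} and rearranging yields
$$
\bigl(W_{i+1}+e(x_{i+1})\bigr)-\bigl(W_i+e(x_i)\bigr)\ \ge\ -\eps\bigl(q(x_i)-q(x_{i+1})\bigr),
$$
and summing over $0\le i<k$ telescopes to $W_k+e(x_k)\ge e(x_0)-\eps q(x_0)\ge e(x_0)-N\eps\|x_0\|_1$.

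\textbf{Closing the two cases.} If $k<\infty$, Proposition~\ref{e-deg} supplies $E(x_k)\ge e(x_k)-2\eps\|x_k\|_1\ge e(x_k)-2\eps\|x_0\|_1$ (since the total sand is non-increasing). Combining with the telescoped bound,
$$
E(x_0)\ \ge\ W_k+E(x_k)\ \ge\ e(x_0)-(N+2)\eps\|x_0\|_1\ \ge\ e(x_0)-2N\eps\|x_0\|_1,
$$
the last step holding for $N\ge 2$; when $N=1$ we have $\|x\|_{1,2}=0$ for every $x$, so every arrangement is degenerate and Proposition~\ref{e-deg} alone gives the bound $e(x)-2\eps\|x\|_1=e(x)-2N\eps\|x\|_1$. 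If $k=\infty$, inequality~\eqref{delta} forces $q(x_j)\le(1-\delta_{\eps,N})^j q(x_0)\to 0$; since $x_j$ has no position-$0$ mass we obtain $\|x_j\|_1\le q(x_j)\to 0$, whence $e(x_j)\le\|x_j\|_1\to 0$ by the Remark, and passing to the limit in the telescoped inequality gives $\lim_j W_j\ge e(x_0)-N\eps\|x_0\|_1$, which is stronger than what is required.

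\textbf{Main obstacle.} The delicate point is not the telescoping itself but the identity $e(y_i)=e(x_{i+1})+\Delta W_{i+1}$: it exploits the $p$-independence of the weights at winning cells to turn the instantaneous collection of sand into an exact transfer from $e$ into $W$, so that the potential $W+e$ decreases step by step only by Lemma~\ref{move}'s slack $\eps(q(x_i)-q(x_{i+1}))$, whose total is bounded by $\eps q(x_0)\le N\eps\|x_0\|_1$. Beyond this identity, the remaining ingredients are a telescoping sum and a routine tail-convergence argument; the adaptive case-split at every step is what spares us from needing Lemma~\ref{move} to dynamically preserve $\eps$-regularity.
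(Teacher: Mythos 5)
Your proof is correct and takes essentially the same route as the paper's: iterate Lemma~\ref{move} on $\eps$-regular arrangements, telescope the potential $e-\eps q$ (you add the accumulated win $W$ explicitly, which the paper keeps implicit by leaving the winning-cell sand on the board and using $E(x^s)\geq x^s_{0,path~0}+x^s_{0,path~1}+x^s_{0,path~2}$), and close via Proposition~\ref{e-deg} in the $\eps$-degenerate case and via the geometric decay of $q$ from \eqref{delta} otherwise. The remaining differences (explicit win-tracking, taking a limit instead of stopping once $q(x^s)\leq\eps q(x)$, and the small-$N$ constant check) are cosmetic.
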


\begin{proof}
  Let Pusher perform moves described in Lemma~\ref{move} while the appearing arrangements are $\eps$-regular. let $x=x^0,x^1,\dots$ denote the sequence of arrangements appearing before Pusher's moves. Due to \eqref{eps}, we have $e(x^{i})-\varepsilon\cdot q(x^{i})\leq e(x^{i-1})-\varepsilon\cdot q(x^{i-1})$ for all $i$. If this process lasts indefinitely, the value of $q(x^i)$ becomes arbitrarily small, due to~\eqref{delta}. So, eventually one of the following two options occurs.

  \smallskip
  \noindent
  \textit{Case 1: Some arrangement $x^s$ satisfies $q(x^s)\leq \eps\cdot q(x)$.} This means that $E(x^s)\geq x^s_{0,path~0}+x^s_{0,path~1}+x^s_{0,path~2}\geq \|x^s\|_1-q(x^s)\geq \|x^s\|_1-\eps\cdot q(x)$. 
  Therefore, we have
  $$
    e(x)-\eps\cdot q(x)\leq e(x^s)-\eps\cdot q(x^s)\leq \|x^s\|_1\leq E(x^s)+\eps\cdot q(x),
  $$
  so $E(x)\geq E(x^s)\geq e(x)-2\eps\cdot q(x)\geq e(x)-2\varepsilon N\|x\|_1$.
  
  \smallskip
  \noindent
  \textit{Case 2. Some arrangement $x^s$ is $\eps$-degenerate (so Pusher cannot proceed on).} By Proposition~\ref{e-deg}, we have $E(x^s)\geq e(x^s)-2\eps\|x^s\|_1$. Therefore,
  $$
    e(x)-\eps\cdot q(x)\leq e(x^s)-\eps\cdot q(x^s)\leq E(x^s)+2\eps\|x^s\|_1\leq E(x)+2\eps\|x\|_1,
  $$
  so $E(x)\geq e(x)-2\eps\|x\|_1-\eps\cdot q(x)\geq e(x)-\varepsilon(N+2)\|x\|_1.$
\end{proof}

\renewcommand{\refname}{References}

\end{document}